\newcommand{\Z}{\mathbb Z}
\newcommand{\Lee}{\operatorname{Lee}}
\newtheorem{theorem}{Theorem}
\newtheorem{corollary}[theorem]{Corollary}
\newtheorem{proposition}[theorem]{Proposition}
\newtheorem{lemma}[theorem]{Lemma}
\newtheorem{algorithm}{Algorithm}
\newtheorem{example}{Example}
\theoremstyle{definition}
\newtheorem{definition}[theorem]{Definition}
\theoremstyle{remark}
\newtheorem{remark}[theorem]{Remark}
\begin{document}

\begin{frontmatter}

\title{Algebraic Decoding of Negacyclic Codes over ${\mathbb Z}_4$}

\author[ucd]{Eimear Byrne\fnref{fn1}}\ead{ebyrne@ucd.ie}
\author[ucd]{Marcus Greferath\fnref{fn1}}\ead{marcus.greferath@ucd.ie}
\author[cat]{Jaume Pernas\fnref{fn2}}\ead{jaume.pernas@uab.cat}
\author[ucd]{Jens Zumbr\"agel\fnref{fn1}}\ead{jens.zumbragel@ucd.ie}

\fntext[fn1]{The work of E. Byrne, M. Greferath, and J. Zumbr\"agel
  was supported by the Science Foundation Ireland under Grants
  06/MI/006, 08/RFP/MTH1181, and 08/IN.1/I1950.}  
\fntext[fn2]{The work of J. Pernas was partially supported by the
  Spanish MICINN under Grants PCI2006-A7-0616 and TIN2010-17358, and
  by the Catalan AGAUR under Grant 2009SGR1224.}

\address[ucd]{%
  Claude Shannon Institute, School of Mathematical Sciences\\
  University College Dublin, Ireland}

\address[cat]{%
  Departament d'Enginyeria de la Informaci\'o i de les Comunicacions\\
  Universitat Autonoma de Barcelona, Espa\~na}

\begin{abstract}
  In this article we investigate Berlekamp's negacyclic codes and
  discover that these codes, when considered over the integers modulo
  $4$, do not suffer any of the restrictions on the minimum distance
  observed in Berlekamp's original papers \cite{B67,B68}: our codes
  have minimim Lee distance at least $2t\!+\!1$, where the generator
  polynomial of the code has roots $\alpha, \alpha^3, \dots,
  \alpha^{2t-1}$ for a primitive $2n$th root $\alpha$ of unity in a
  Galois extension of ${\mathbb Z}_4$; no restriction on $t$ is
  imposed. We present an algebraic decoding algorithm for this class
  of codes that corrects any error pattern of Lee weight $\leq t$. Our
  treatment uses Gr\"obner bases, the decoding complexity is quadratic
  in~$t$.  
\end{abstract}   

\begin{keyword}
  negacyclic code \sep integers modulo $4$ \sep Lee metric \sep Galois
  Ring \sep decoding \sep Gr\"obner bases \sep key equation \sep
  solution by approximations \sep module of solutions
\end{keyword}

\end{frontmatter}


\section{Introduction}

In his seminal papers \cite{B67,B68}, Berlekamp introduced negacyclic
codes over odd prime fields ${\rm GF}(p)$, and designed a decoding
algorithm that corrects up to $t \le \lfloor \frac{p-1}{2} \rfloor$
Lee errors.  The main idea in Berlekamp's contribution is to represent
error patterns of weight $w$ solely by error locator polynomials of
degree $w$, where the error values are encoded essentially in the
multiplicity of the respective error locations.  Berlekamp's error
locator polynomial satisfies some type of key equation that is solved
during the decoding procedure. Its solution ultimately depends on the
multiplicative invertibility of all odd integers $i \le 2t\!-\!1$ in
(a field extension of) ${\rm GF}(p)$ where $t$ is the maximum Lee
weight of all correctable error patterns. This finally requires $t <
\frac{p+1}{2} $, which is the reason why this idea yields only a very
small class of useful codes.

The project underlying this article revisits Berlekamp's work and
starts with the observation that almost all of the algebra used in the
quoted papers is still valid in a Galois ring, i.e.\ a Galois
extension of the integers modulo $p^m$ where $m$ might be greater
than~$1$. The divisibility condition mentioned above causes problems
if and only if $p$ is odd, and this brought us to the idea to study
codes over $\Z_{2^m}$.

The paper at hand considers the simplest (non-trivial) case, namely
the case where $m=2$, which means we consider negacyclic codes over
$\Z_4$ under the Lee metric. We will show that a negacyclic code is
indeed of minimum Lee distance at least $2t\!+\!1$ if its generator
polynomial has roots $\alpha,\alpha^3,\ldots,\alpha^{2t-1}$ for a
primitive $2n$th root of unity $\alpha$ in a Galois extension of
$\Z_4$. No restriction on $t$ will be imposed.  We present an
algebraic decoding algorithm for this class of codes that corrects any
error pattern of Lee weight $\leq t$.  


\section{Preliminaries}

Throughout this paper, let $R$ denote the Galois ring ${\rm GR}(4,m)$
of characteristic~$4$, order $4^m$, and residue field $K={\rm
  GF}(2^m)$. We let $\mu: {R \longrightarrow K}$, $a \mapsto a + 2R$
be the canonical map from $R$ onto $K$.

The structure of $R$ is well understood (cf.~\cite{R69}).  Its
multiplicative group~$R^{\times}$ has order $2^m(2^m\!-\!1)$ and
contains a unique cyclic subgroup of order $2^m\!-\!1$. This group, in
union with zero, forms the so-called {\em Teich\-m\"uller set} of $R$,
which we denote by ${\cal T}$.  The set ${\cal T}$ forms a complete
set of coset representatives of $2R$ in $R$ and so the image of ${\cal
  T}$ under $\mu$ is the residue field $K$.  Each element $a \in R$
can be expressed in the canonical form $a:=a_0+2a_1$ for suitable
$a_0, a_1 \in {\cal T}$.  The automorphism group of $R$ is cyclic of
order $m$ and with respect to the above canonical form is generated by
the map \[ \pi: R \longrightarrow R \:,\quad a_0 + 2 a_1 \mapsto
a_0^2+2a_1^2 \:. \] Note that for an element $\theta$ of $\cal T$ we
have $\pi( \theta) = \theta^2$.  We remark that $R$ does not contain
an element of order~$4$: suppose $a=a_0+2a_1\in R$ has order~$4$ where
$a_0, a_1\in{\cal T}$, then $a^2 = a_0^2\in{\cal T}$ has order~$2$,
which is impossible as the order of an element in ${\cal T}$ must
divide $2^m\!-\!1$.

\section{Negacyclic Codes Over $\Z_4$}

The following is a BCH-like description of negacyclic codes
over~$\Z_4$, and can be read as the obvious extension of Berlekamp's
work in \cite{B67,B68}.  We outline the theory for the convenience of
the reader, see~\cite{W99} for further details.

\begin{definition}
  Let $n$ be a positive integer.  A {\em negacyclic code} of length
  $n$ over $\Z_4$ is an ideal in the ring $\Z_4[x] /
  \langle x^n \!+\!  1 \rangle$.
\end{definition}


We will work with {\em roots} of a negacyclic code, i.e.\ elements
$\alpha\in R$ satisfying $\alpha^n=-1$.  Note that roots in $R$ exist
only if $n$ is odd: if $n=2\ell$ was even and $\alpha^{2\ell}=-1$,
then $\alpha^{\ell}$ was an element of order~$4$ in $R$, which is
impossible.

Henceforth we will assume that $n$ is odd.  Then there is a primitive
$2n$th root of unity $\alpha$ in $R$ such that $\alpha^n = -1$, i.e.,
$\alpha = -\beta$, where $\beta$ is a primitive $n$th root of unity in
$R$.

Any $\Z_4$-negacyclic code is a principal ideal in $\Z_4[x] / \langle
x^n \!+\!  1 \rangle$, in fact it is generated by a polynomial of the
form $a(b+2) \in \Z_4[x]$ where $x^n+1=abc$ and $a,b,c$ are pairwise
coprime polynomials, in which case the code has size $4^{\delta c
}2^{\delta b}$ where $\delta f$ denotes the degree of the polynomial
$f$ (cf.~\cite[Th.~2.7]{W99}).  There is a natural correspondence
between negacyclic and cyclic codes over $\Z_4$. This is given by the
map
\[ \lambda: \Z_4[x] / \langle x^n \!-\!  1 \rangle 
\longrightarrow \Z_4[x] / \langle x^n \!+\! 1 \rangle \,, 
\quad a(x) \mapsto a(-x) \:. \] 
Clearly, $\lambda$ is ring isomorphism, from which it follows that
any ideal $C$ in $\Z_4[x] / \langle x^n \!-\!  1 \rangle$ is
mapped to an ideal $\lambda(C)$ of $\Z_4[x] / \langle x^n
\!+\!  1 \rangle$.  Moreover, $\lambda$ is an isometry with respect
to the Lee distance, since for every $c = c_0+c_1x+ \cdots +
c_{n-1}x^{n-1} \in \Z_4[x] / \langle x^n \!-\!  1 \rangle$,
we have $\lambda(c) = c_0-c_1x \pm \cdots + c_{n-1}x^{n-1}$ which is
obviously of the same Lee weight as $c$.

\begin{theorem}\label{thmindist}
  Let $C$ be a negacyclic code over $\Z_4$ of odd length $n$
  whose generator polynomial $g$ has the roots $\alpha$, $\alpha^3$,
  \dots, $\alpha^{2t-1}$ for some primitive $2n$th root of unity
  $\alpha \in R$ such that $\alpha^n = -1$.  Then $C$ has minimum Lee
  distance $d_{\Lee}$ at least $2t\!+\!1$.
\end{theorem}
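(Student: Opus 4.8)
The plan is to argue by contradiction. Suppose, for contradiction, that $C$ contains a nonzero codeword $c$ of Lee weight $w\le 2t$. First I would put $c$ into \emph{signed support form}: writing each coefficient of $c$ as a sum of as many terms $\pm 1$ as its Lee weight (so $1=(+1)$, $3=(-1)$, and $2=(+1)+(+1)$), one obtains $c(x)=\sum_{k=1}^{w}b_k x^{a_k}$ with $b_k\in\{1,-1\}$ and $a_k\in\{0,\dots,n-1\}$, where a coefficient-$2$ position occurs twice (with $b_k=1$ both times) and every other nonzero position occurs once. Since $c\in C=\langle g\rangle$ and $g$ vanishes at $\alpha,\alpha^{3},\dots,\alpha^{2t-1}$, which are legitimate evaluation points because $(\alpha^{2i-1})^{n}=(-1)^{2i-1}=-1$, the syndromes
\[
  S_{2i-1}\ :=\ c(\alpha^{2i-1})\ =\ \sum_{k=1}^{w} b_k\,\alpha^{(2i-1)a_k}
\]
vanish for $i=1,\dots,t$.

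Next I would transport the problem into the Teichm\"uller set. Writing $\alpha=-\beta$ with $\beta$ a primitive $n$th root of unity, which necessarily lies in ${\cal T}$ since $n$ is odd, and using that $2i-1$ is odd, we get $\alpha^{(2i-1)a_k}=(-1)^{a_k}\beta^{(2i-1)a_k}$. Putting $\theta_k:=\beta^{a_k}\in{\cal T}\setminus\{0\}$ and absorbing the sign into the coefficient, $c_k:=b_k(-1)^{a_k}\in\{1,-1\}$, the syndrome conditions become $\sum_{k=1}^{w}c_k\,\theta_k^{\,2i-1}=0$ for $i=1,\dots,t$. The decisive step is to apply the Frobenius $\pi$ of $R$: since $\pi$ is a ring automorphism fixing the prime ring $\Z_4$ (hence each $c_k$) and acting as $\theta\mapsto\theta^{2}$ on ${\cal T}$, iterating it gives
\[
  \sum_{k=1}^{w} c_k\,\theta_k^{\,2^{a}(2i-1)}\ =\ 0
  \qquad (a\ge 0,\ 1\le i\le t).
\]
Every $j\in\{1,\dots,2t\}$ has the form $2^{a}(2i-1)$ with $1\le i\le t$ (its odd part is at most $2t-1$), so in fact $\sum_{k=1}^{w}c_k\,\theta_k^{\,j}=0$ for $j=1,\dots,2t$: the Frobenius of $R$ has upgraded the $t$ ``odd'' syndrome identities to $2t$ consecutive power-sum identities, supplying in particular the one extra value $j=2t$ that the residue-field Frobenius alone cannot give.

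Finally I would collapse repeated locators and run a Vandermonde argument over $R$. Distinct positions give distinct $\theta_k$ (as $\beta$ has order $n$), so after collecting terms the identities read $\sum_{p=1}^{N}\varepsilon_p\,\eta_p^{\,j}=0$ for $j=1,\dots,2t$, where $\eta_1,\dots,\eta_N$ are the $N\le w\le 2t$ distinct locators and $\varepsilon_p\in\{1,-1,2,-2\}$, a value $\pm2$ occurring precisely at a former coefficient-$2$ position. Taking $j=1,\dots,N$, the coefficient matrix $\bigl(\eta_p^{\,j}\bigr)$ has determinant $\bigl(\prod_{p}\eta_p\bigr)\prod_{p<q}(\eta_q-\eta_p)$, which is a unit: each $\eta_p$ is a unit, and since $R$ is local with maximal ideal $2R$ and $\mu$ maps ${\cal T}$ bijectively onto $K$, a difference of distinct Teichm\"uller elements lies outside $2R$ and is invertible. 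Hence every $\varepsilon_p=0$, which is absurd because $\pm1$ and $\pm2$ are nonzero in $\Z_4$; so there are no locators at all, forcing $c=0$, the desired contradiction. Therefore $d_{\Lee}\ge 2t+1$. I expect the only delicate points to be the bookkeeping of coefficient-$2$ positions in the signed-support form (so that they yield honest $\pm2$'s rather than two cancelling $\pm1$'s) and the unit-check for the Vandermonde determinant; the conceptual content — that $\pi$ furnishes enough syndromes and that the only scalars one must invert are differences of distinct Teichm\"uller elements, always units in $R$ — is exactly why the $\Z_4$ construction escapes Berlekamp's restriction on $t$.
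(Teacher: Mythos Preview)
Your argument is correct and shares the paper's key insight --- applying the Frobenius $\pi$ of $R$ to upgrade the $t$ odd-index relations to a full run of $2t$ consecutive ones --- but the execution is genuinely different. The paper transfers the problem to the associated \emph{cyclic} code via the Lee-isometry $\lambda:a(x)\mapsto a(-x)$, observes that the generator of that cyclic code acquires the $2t$ consecutive roots $\beta,\beta^{2},\dots,\beta^{2t}$ (with $\beta=-\alpha$), and then invokes the BCH bound for cyclic codes over Galois rings from~\cite{BF02} as a black box to obtain Hamming distance $\ge 2t+1$; the Lee bound follows since Lee weight dominates Hamming weight, and $\lambda$ carries it back to $C$. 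You instead stay inside the negacyclic code, encode the Lee weight directly through the signed-support expansion (which is precisely why your argument handles coefficient-$2$ positions without loss), and replace the cited BCH bound by an explicit Vandermonde computation over $R$, verifying by hand that differences of distinct Teichm\"uller elements are units. Your route is self-contained and makes transparent exactly which ring elements must be inverted; the paper's route is shorter because it defers that work to the literature and to the trivial inequality $d_{\Lee}\ge d_{\mathrm{Ham}}$.
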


\begin{proof}
  Let $D$ be the pre-image of $C$ under $\lambda$.  Then $D$ is a
  cyclic code of length~$n$, with generator polynomial $f$ satisfying
  $\lambda(f)=g \in \Z_4[x]$. Then $f$ has the roots $\beta,
  \beta^3,\dots,\beta^{2t-1}$ where $\beta = - \alpha$ is a primitive
  $n$th root of unity in $R$.  Now $f \in \Z_4[x]$ is fixed by the
  automorphism $\pi$, so that $0=\pi(f(\theta)) = f(\pi(\theta))$ for
  any root $\theta$ of $f$ in $R$.  Since $\beta$ is contained in the
  Teichm\"uller set of $R$, $f$ also has the roots
  $\pi^j(\beta^i)=\beta^{2^ji}$ for $i \in \{1,3\dots,2t-1\}$.
  Therefore, $f$ has the $2t$ consecutive roots
  $\beta,\beta^2,\dots,\beta^{2t}$.  Therefore a generalization of the
  well-known BCH bound (see for example \cite[Th.~IV.1]{BF02}) shows
  that $D$ has minimum Hamming distance at least $2t\!+\!1$. This
  gives a trivial lower bound on the minimum Lee distance of~$D$. The
  claim now follows from the above isometry observation.
\end{proof}

\begin{remark}
  The lower bound on the Lee distance of negacyclic codes given in
  Theorem~\ref{thmindist} is in general not sharp.  Indeed there are
  codes $C$ with $d_{\Lee}>2t\!+\!1$, as Table~\ref{table} shows.  If
  the actual Lee distance is at least $2r\!+\!1$ with $r>t$ we will
  see in the next section that the key equation carries sufficient
  information to determine all error pattern of Lee weight at
  most~$r$, thus being able to correct up to $r$ errors.  We will then
  present a concrete decoding algorithm for error patterns up to Lee
  weight~$t$.

  \begin{table}\label{table}\centering
    \caption{Parameters of negacyclic codes of length $n$, designed
      error-correcting capability $t$, and rank $k$ (i.e., size $4^k$).}
    \medskip
    \begin{tabular}{ccccc}
      \hline\noalign{\smallskip}
      ~$n$~ & ~~$t$~~ & ~~$k$~~ & $2t\!+\!1$ & ~$d_{\Lee}$~ \\
      \noalign{\smallskip}
      \hline
      \noalign{\smallskip}
      15 & 1 & 11 & 3 & 3 \\
      & 2 & 7 & 5 & 5 \\
      & 3 & 5 & 7 & 10 \\
      \noalign{\smallskip}
      31 & 1 & 26 & 3 & 4 \\
      & 2 & 21 & 5 & 7 \\
      & 3 & 16 & 7 & 12 \\
      & 5 & 11 & 11 & 16 \\
      & 7 & 6 & 15 & 26 \\
      \hline
    \end{tabular}
  \end{table}
\end{remark}

\section{The key equation}

Let $C$ be a negacyclic code with roots $\alpha$, $\alpha^3$, \dots,
$\alpha^{2t-1}$ and minimum Lee distance $d_{\Lee}\ge 2r\!+\!1$.  Let
$v\in\Z_4[z]$ be a received word satisfying $d(v,C)\le r$.  We will
design a decoder to retrieve the unique error polynomial $e$
satisfying $e=v-c$ for some codeword $c$, where $e$ has Lee weight at
most $r$.  Most of what follows will be reminiscent of the according
steps in Berlekamp's papers~\cite{B67,B68}. We will amend the methods
from those sources to the situation at hand.

Let $w$ denote the Lee weight.  We define the error locator polynomial
\begin{equation}\label{eqsigma}
 \sigma = \prod_{i=0}^{n-1} (1-X_iz)^{w(e_i)} \in R[z] \:, 
\end{equation} 
where $X_i=0$ if $e_i=0$, $X_i= \alpha^i$ if $e_i \in \{1,2\}$, and 
$X_i = -\alpha^i = \alpha^{i+n}$ if $e_i=3$.
For each positive integer $k$, we let $s_k$ denote the sum of the
$k$th powers of the reciprocals of the roots of $\sigma$, including
repeated roots, i.e.
\[ s_k = \sum_{j=0}^{n-1} w(e_j) X_j^k \:, \quad k \ge 1 \:. \]
We note that $w(e_j) X_j^k = e_j \alpha^{jk}$ holds for all odd $k$.
Hence, for each $k \in \{1,3,\dots,2t\!-\!1\}$, the $k$th syndrome
$s_k = e(\alpha^k) = v(\alpha^k)$ is known to the decoder.  Let $s$
denote the power series $\sum_{k=1}^{\infty} s_k z^k \in R(z)$. We
have
\[\sigma'(z) = - \sum_{j=0}^{n-1} w(e_j)X_j 
\prod_{i \ne j} (1-X_i z)^{w(e_i)}(1-X_j z)^{w(e_j)-1} \:, \]
and thus  
\begin{align*}
  z \sigma'(z) &=
  - z \sum_{j=0}^{n-1} \prod_{i=0}^{n-1} (1-X_iz)^{w(e_i)} 
  \frac{w(e_j)X_j }{1-X_j z} = - \sigma(z) \sum_{j=0}^{n-1} w(e_j) 
  \sum_{k=1}^{\infty} (X_j z)^k  \\
  &= - \sigma(z)  \sum_{k=1}^{\infty} \Big( \sum_{j=0}^{n-1} w(e_j) X_j^k \Big) z^k 
  = - \sigma(z) \sum_{k=1}^{\infty} s_k z^k = - \sigma(z) s(z)\:.
\end{align*}
Therefore 
\begin{equation}\label{eqnewt}
  s\sigma + z \sigma' =0 \:,
\end{equation}
where the coefficients $s_1,s_3,\dots,s_{2t-1}$ are known to the
decoder.  For any power series $P(z) = \sum_{k=0}^{\infty} P_k z^k \in
R(z)$ we denote the even part and the odd part by $P_e = \sum_{j\ge 0}
P_{2j} z^{2j}$ and $P_o = \sum_{j\ge 0} P_{2j+1} z^{2j+1}$ ,
respectively.  Then the even part and the odd part of equation
(\ref{eqnewt}) read
\begin{gather}
  s_e\sigma_e + s_o\sigma_o + z(\sigma_e)' = 0 \:, \label{eqnewte}\\
  s_e\sigma_o + s_o\sigma_e + z(\sigma_o)' = 0 \:. \label{eqnewto}
\end{gather}
Subtracting $\sigma_e$ times equation~(\ref{eqnewto}) from $\sigma_o$ times
equation~(\ref{eqnewte}) results in the equation
\begin{equation}\label{eqse}
  s_o(\sigma_o^2 - \sigma_e^2) + 
  z \big( (\sigma_e)'\sigma_o - (\sigma_o)'\sigma_e \big) = 0 \:,
\end{equation}
which involves only the odd part of $s$, the latter being known modulo
$z^{2t+1}$. Now let $u = \frac{\sigma_o}{\sigma_e} \in R(z)$ and
rewrite equation~(\ref{eqse}) to obtain
\begin{equation}\label{eqsu}
  s_o(u^2-1)=zu' \:,
\end{equation}
from which we can recursively compute the coefficients
$u_1,u_3,u_5,\dots u_{2t-1}$ via the equations 
\begin{align*}
  u_1 &= -s_1\\
  u_3 &= \frac{-s_3+u_1^2s_1}{3}\\
  u_5 &= \frac{-s_5+u_1^2s_3+2u_1u_3s_1}{5}\\
  \vdots\ &= \quad \vdots
\end{align*}
The reader should notice that this is the point where
Berlekamp's original approach can continue only by imposing a severe
restriction on $t$. In our situation however all the above
denominators are invertible in $R$.

Clearly, $u$ is an odd function and so we may define the power series
$T$ by 
\begin{equation}\label{equT} 
T(z^2) = (1+ zu(z))^{-1} -1. 
\end{equation}
Moreover, the coefficients $T_1,\dots,T_t$ are all known to the
decoder. Next, we define the polynomials $\varphi, \omega \in R[z]$ by
the equations
\begin{equation}\label{eqphiomega}
  \omega(z^2)=\sigma_e(z) \:, \quad \text{ and } \quad
  \varphi(z^2) = \sigma_e(z) + z \sigma_o(z) \:.
\end{equation}
Noting that $1+T(z^2) = \frac{\sigma_e}{\sigma_e+z\sigma_o}$ we
finally arrive at a key equation:
\begin{equation}\label{eqkey}
  (1 + T)\, \varphi \equiv \omega \mod z^{t+1} \:,
\end{equation}
which is the main task of the decoder to solve.

Knowledge of $\varphi$ and $\omega$ is sufficient to recover the error
locations along with their multiplicities.  Using
equation~(\ref{eqphiomega}) we may obtain $\sigma$.  The decoder could
run through the $2n$ roots of unity $1,\alpha,\dots,\alpha^{2n-1}$ and
determine the error polynomial $e$ by
\[ e_j = \begin{cases}
  \,0 & \text{ if } \sigma(\alpha^{-j}) \ne 0 
  \ \text{ and }\ \sigma(\alpha^{-j+n}) \ne 0\\
  \,1 & \text{ if } \sigma(\alpha^{-j}) = 0 
  \ \text{ and }\ \sigma(\alpha^{-j+n}) \ne 0\\
  \,2 & \text{ if } \sigma(\alpha^{-j}) = 0 
  \ \text{ and }\ \sigma(\alpha^{-j+n}) = 0\\
  \,3 & \text{ if } \sigma(\alpha^{-j}) \ne 0 
  \ \text{ and }\ \sigma(\alpha^{-j+n}) = 0\\           
\end{cases} \:. \]
This is easy to see since for each $j \in \{0,\dots,n-1\}$, we have
\begin{gather*}
  \sigma(\alpha^{-j}) = 
  \gamma \prod_{i\ne j} (1 \mp \alpha^{i-j})^{w(e_i)}\\
  \sigma(\alpha^{-j+n}) = \delta \prod_{i\ne j} (1 \pm
  \alpha^{i-j})^{w(e_i)} 
\end{gather*}
where $\gamma = (1\mp 1)^{w(e_j)}\ne 0$ if and only if
$e_j\in\{0,3\}$, $\delta = (1\pm 1)^{w(e_j)}\ne 0$ if and only if
$e_j\in\{0,1\}$, and $1 \pm \alpha^{i-j}$ is a unit whenever $i \ne
j$.

Now we will show that the key equation carries sufficient information
to determine any error pattern of Lee weight at most~$r$.  Let
$B(0,r)$ denote the ball in $\Z_4^n$ centered in $0$ with radius $r$,
and let $\alpha:B(0,r)\to R[z]$ be the function $e\mapsto\sigma$,
mapping an error pattern to its error locator polynomial (see
equation~(\ref{eqsigma})).  Then we consider the function
\[ f: \alpha(B(0,r))\to R^t \:,\quad \sigma\mapsto(T_1,\dots,T_t) \:, \]
where the coefficients $T_1,\dots,T_t$ of the power series $T$ are
obtained as outlined above (see equations~(\ref{eqsu}) and~(\ref{equT})).

\begin{lemma}
  The map $f:\sigma\mapsto(T_1,\dots,T_t)$ is injective on $\alpha(B(0,r))$.
\end{lemma}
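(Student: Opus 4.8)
The plan is to factor $f$ through the partial syndrome tuple $(s_1,s_3,\dots,s_{2t-1})$ and then to invoke the minimum Lee distance of $C$.

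First I would note that $f$ factors as $\sigma\mapsto(s_1,s_3,\dots,s_{2t-1})\mapsto(u_1,u_3,\dots,u_{2t-1})\mapsto(T_1,\dots,T_t)$, where the last two arrows are \emph{bijective}. By~(\ref{eqsu}), each $u_{2j+1}$ equals $-s_{2j+1}$ plus a polynomial in $u_1,\dots,u_{2j-1}$ and $s_1,\dots,s_{2j-1}$, divided by $2j\!+\!1$; since every odd integer is a unit in $R$, this triangular system can be read in both directions, so $(s_1,\dots,s_{2t-1})$ and $(u_1,\dots,u_{2t-1})$ determine each other. By~(\ref{equT}), $1+T(z^2)=(1+zu(z))^{-1}$, so $u\bmod z^{2t+1}$ (equivalently $u_1,\dots,u_{2t-1}$, since $u$ is odd) and $T\bmod z^{t+1}$ (equivalently $T_1,\dots,T_t$) are obtained from one another by inversion of a power series with unit constant term --- again a bijection. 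Hence $f(\sigma)=f(\tilde\sigma)$ forces $\sigma$ and $\tilde\sigma$ to have the same partial syndromes. Since $\alpha(B(0,r))$ is the image of the map $\alpha$, and since for $\sigma=\alpha(e)$ one has $s_k=e(\alpha^k)$ for odd $k$ (as noted earlier in this section), it suffices to prove: if $e,\tilde e\in B(0,r)$ satisfy $e(\alpha^k)=\tilde e(\alpha^k)$ for all $k\in\{1,3,\dots,2t-1\}$, then $e=\tilde e$, for then $\alpha(e)=\alpha(\tilde e)$.

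To prove this, put $h:=e-\tilde e\in\Z_4[z]$, so $h(\alpha^k)=0$ for $k\in\{1,3,\dots,2t-1\}$. As $h$ has coefficients in $\Z_4$, it is fixed by the automorphism $\pi$, hence --- exactly as in the proof of Theorem~\ref{thmindist} --- $h$ also vanishes at every Galois conjugate $\pi^j(\alpha^k)$. These conjugates are precisely the roots of the generator polynomial $g$ of $C$, whose root set in $R$ is the union of the Galois orbits of $\alpha,\alpha^3,\dots,\alpha^{2t-1}$. Since $n$ is odd, $z^n\!+\!1$ is separable over $\Z_4$: it has $n$ distinct roots in $R$, namely the odd powers of $\alpha$, and any two of them differ by a unit, because $\mu(\zeta)\ne1$ in $K$ for every nontrivial $n$th root of unity $\zeta$. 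Consequently $g$, a squarefree divisor of $z^n\!+\!1$, divides $h$ in $\Z_4[z]$, so $h$ lies in the ideal $C$. Finally, the Lee weight is subadditive, whence $w(h)=w(e-\tilde e)\le w(e)+w(\tilde e)\le 2r<2r\!+\!1\le d_{\Lee}$; a nonzero codeword of Lee weight below $d_{\Lee}$ cannot exist, so $h=0$, i.e.\ $e=\tilde e$ and $\sigma=\tilde\sigma$.

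I expect the one genuinely delicate point to be showing that $h(\alpha^k)=0$ for the designed exponents already forces $h\in C$. Over $\Z_4$ there is no literal root--factor dictionary as over a field, and a general $\Z_4$-negacyclic generator may carry an extra factor of the shape $b+2$ imposing conditions only modulo $2$; so the hypothesis on the designed roots of $C$ must be read in the strong form that $g$ is the squarefree product of the $\Z_4$-minimal polynomials of $\alpha,\alpha^3,\dots,\alpha^{2t-1}$ --- which is in any case the hypothesis under which the identity $s_k=e(\alpha^k)=v(\alpha^k)$ used in the key equation is legitimate. Under it, membership in $C$ is equivalent to vanishing at all roots of $g$, which is what makes the Frobenius-and-separability argument go through. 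The remaining ingredients --- inverting the recursion~(\ref{eqsu}), which rests solely on invertibility of odd integers in $R$ (the point emphasized just after that equation), and subadditivity of the Lee weight --- are entirely routine.
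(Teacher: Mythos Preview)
Your argument is correct and follows the same route as the paper: factor $f$ through the syndrome map, observe that the passages $(s_1,\dots,s_{2t-1})\mapsto(u_1,\dots,u_{2t-1})\mapsto(T_1,\dots,T_t)$ are invertible, and use the minimum Lee distance of $C$ to conclude that the syndrome map is injective on $B(0,r)$. The paper compresses your second paragraph into the single assertion that the kernel of the syndrome map equals $C$; your Frobenius-and-separability discussion and your closing caveat about generators of the form $a(b+2)$ simply make explicit what the paper takes for granted.
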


\begin{proof}
  Consider the syndrome map $\Z_4^n\to R^t$, $v\mapsto
  (s_1,s_3,\dots,s_{2t-1})$, with $s_k=v(\alpha^k)$.  It kernel equals
  the code $C$ of Lee distance at least $2r\!+\!1$, hence the map is
  injective on $B(0,r)$.  Now we observe that the mappings
  $(s_1,\dots,s_{2t-1})\mapsto (u_1,\dots,u_{2t-1})\mapsto
  (T_1,\dots,T_t)$ of equations~(\ref{eqsu}) and~(\ref{equT}) are
  bijective.
\end{proof}

\begin{proposition}\label{prop:keyeq}
  Let $S:=R[z]/(z^{t+1})$.  For any $T=\sum_{i=1}^tT_iz_i\in S$ there
  is at most one error locator polynomial $\sigma\in\alpha(B(0,r))$
  such that the corresponding key equation $(1+T)\,\varphi=\omega$ in
  $S$ is satisfied, where $\omega(z^2)=\sigma_e(z)$ and
  $\varphi(z^2)=\sigma_e(z)+z\sigma_o(z)$.
\end{proposition}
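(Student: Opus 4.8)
The plan is to reduce the claim to the injectivity of the map $f$ established in the preceding lemma. The crucial structural fact is that the polynomial $\varphi$ associated with an error locator $\sigma$ is a \emph{unit} in $S$: from the product form~(\ref{eqsigma}) one reads off $\sigma(0)=1$, hence $\sigma_e(0)=1$, and therefore $\varphi(0)=\sigma_e(0)=1$ by~(\ref{eqphiomega}); since an element of $S=R[z]/(z^{t+1})$ is a unit exactly when its constant coefficient is a unit in $R$, it follows that $\varphi\in S^{\times}$ (the same reasoning gives $\omega\in S^{\times}$, though we shall not need it).

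With this in hand the argument is short. Suppose $\sigma$ and $\tilde\sigma$ both lie in $\alpha(B(0,r))$ and both satisfy the key equation for the \emph{same} $T\in S$, i.e.\ $(1+T)\varphi=\omega$ and $(1+T)\tilde\varphi=\tilde\omega$ in $S$, where $\omega,\varphi$ are built from $\sigma$ and $\tilde\omega,\tilde\varphi$ from $\tilde\sigma$ via~(\ref{eqphiomega}). Multiplying the first equation by $\varphi^{-1}\in S$ gives $T=\omega\varphi^{-1}-1$, so $T$ is completely determined by $\sigma$. On the other hand, the derivation of~(\ref{eqkey}) shows $\omega\varphi^{-1}=1+T$ already in $R(z)$, with $T$ the power series of~(\ref{equT}); reducing modulo $z^{t+1}$ we see that the parameter of the key equation is forced to be $\sum_{i=1}^t T_iz^i$, whose coefficient tuple $(T_1,\dots,T_t)$ is exactly the output $f(\sigma)$ of the recursion~(\ref{eqsu}) and~(\ref{equT}). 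Running the same reasoning for $\tilde\sigma$ yields $f(\tilde\sigma)$ as well, so $f(\sigma)=f(\tilde\sigma)$, and the lemma forces $\sigma=\tilde\sigma$.

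The only step needing genuine care is the identification of the parameter $T$ with $f(\sigma)$: a priori the key equation is merely a congruence modulo $z^{t+1}$ in which $T$ is an unknown, and one must exclude spurious solutions $T\neq f(\sigma)$. Invertibility of $\varphi$ in $S$ is exactly what rules these out — if $(1+T)\varphi\equiv(1+T')\varphi\bmod z^{t+1}$ then $T\equiv T'\bmod z^{t+1}$. The remaining points are routine bookkeeping: that $\omega$ and $\varphi$ are genuinely determined by $\sigma$ (being polynomial expressions in the even and odd parts of $\sigma$), and that the invertibility statements pass from $R(z)$ down to $S$.
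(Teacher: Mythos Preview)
Your proof is correct and follows essentially the same line as the paper's: from $\sigma(0)=1$ deduce $\varphi(0)=\omega(0)=1$, hence $\varphi$ is a unit in $S$, so the key equation forces $1+T=\omega\varphi^{-1}$ and in particular $T=f(\sigma)$; injectivity of $f$ from the preceding lemma then gives uniqueness of $\sigma$. The only cosmetic difference is that you compare two hypothetical solutions $\sigma,\tilde\sigma$ directly, whereas the paper phrases it as $\sigma=f^{-1}(T)$.
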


\begin{proof}
  Suppose that $\sigma\in\alpha(B(0,r))$ satisfies
  $(1+T)\,\varphi=\omega$.  Now $S$ is a local ring with maximal ideal
  $(z)$, and as $\sigma(0)=1$ we have $\varphi(0)=\omega(0)=1$, so
  that $\varphi$ and $\omega$ are units in $S$.  This implies $1+T =
  \omega\varphi^{-1}$, in particular, $T$ is uniquely determined by
  the key equation.  As also $f(\sigma)$ satisfies the key equation by
  construction we have thus $T = f(\sigma)$.  Since $f$ is injective,
  it must hold $\sigma = f^{-1}(T)$, and $\sigma$ is hence uniquely
  determined.
\end{proof}

In the view of Proposition~\ref{prop:keyeq} it remains an open problem
to find the unique solution of the key equation efficiently.  In the
following we assume that $e$ has Lee weight at most $t$, and we
present an efficient decoding method for this case.  

For the classical finite field case, there is a unique pair of coprime
polynomials $[a,b] \in {\rm GF}(p^m)[z]^2$ satisfying the key equation
(\ref{eqkey}) along with the constraints:
\begin{equation}\label{eqdeg}
  a(0) = b(0) = 1 \:, \quad \delta a \le \tfrac{t+1}{2} \:,
  \quad \delta b \le \tfrac{t}{2} \:.
\end{equation}

For the Galois ring case, it is apparent that the required solution
pair $[\varphi,\omega]$ satisfies the constraints~(\ref{eqdeg}).
Although $\varphi$ and $\omega$ are not necessarily coprime in $R[z]$,
we will show in the next section that $2\in R[z]\varphi + R[z]\omega$.
Now over the ring~$R$, a solution $[a,b]$ of the key
equation~(\ref{eqkey}) satisfying $2\in R[z]a + R[z]b$ and the
constraints~(\ref{eqdeg}) will still not be unique in general, but the
modulo 2 solution $[\mu a,\mu b]\in K[z]$ is unique, which will be
sufficient for the decoding problem.

\section{The Ideal Generated by $\varphi$ and $\omega$}

We will show that $2$ can be expressed as a $R[z]$-linear combination
of~$\varphi$ and~$\omega$.  First we note some useful observations.

Let $S$ be a commutative ring with identity $1$.  For $f,g\in S$ we use
the notation $(f,g) := Sf+Sg$ to denote the ideal generated by $f$ and
$g$ in $S$.

\begin{lemma}\label{lem:cp1}
  Let $f,g,h\in S$. Then
  \begin{enumerate}[(a)]
  \item\ $(f,g) = (f,hf\!+\!  g)$,
  \item\ $(h,g) = S$ \ implies \ $(f,g) = (hf,g)$.
  \end{enumerate}
\end{lemma}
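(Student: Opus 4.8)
The plan is to prove both statements by directly checking the two set-theoretic inclusions in each case, working only from the definition $(f,g)=Sf+Sg$ and the fact that $S$ is commutative with identity $1$; no deeper structure is needed.

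For part~(a) I would proceed as follows. To see $(f,g)\subseteq(f,hf\!+\!g)$, note that $f\in(f,hf\!+\!g)$ is immediate and that $g=(hf\!+\!g)-hf\in S(hf\!+\!g)+Sf=(f,hf\!+\!g)$. For the reverse inclusion $(f,hf\!+\!g)\subseteq(f,g)$, observe that $f\in(f,g)$ trivially and $hf\!+\!g\in Sf+Sg=(f,g)$. Combining the two inclusions gives the claimed equality.

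For part~(b) the first step is to rephrase the hypothesis $(h,g)=S$ as a B\'ezout-type identity: there exist $p,q\in S$ with $ph+qg=1$. The inclusion $(hf,g)\subseteq(f,g)$ is then clear, since $hf\in Sf$ and $g\in Sg$. For the reverse inclusion $(f,g)\subseteq(hf,g)$ it suffices to place $f$ in $(hf,g)$, which is achieved by multiplying the identity by $f$: $f=(ph+qg)f=p(hf)+(qf)g\in S(hf)+Sg=(hf,g)$; together with $g\in(hf,g)$ this yields the inclusion, hence equality.

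I do not expect any real obstacle here, as the statement is purely formal. The only point that warrants a moment's care is the translation in part~(b) of the condition $(h,g)=S$ into the existence of coefficients $p,q$ with $ph+qg=1$; once that observation is made, both parts reduce to unwinding the definition of the ideal generated by two elements.
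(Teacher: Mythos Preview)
Your proposal is correct and follows essentially the same approach as the paper. The paper only spells out the non-trivial inclusion $(f,g)\subseteq(hf,g)$ in part~(b), using exactly your B\'ezout identity argument, and leaves the rest (including all of part~(a)) to the reader; your version simply fills in those routine details.
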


\begin{proof}
  We will only prove the inclusion $(f,g)\subseteq (hf,g)$ in (b).
  Since $(h,g)=S$ there are $a,b\in S$ such that $ah+bg=1$, and
  consequently $ahf+bgf=f$.  Now, for all $r,s\in S$ we have
  \[ rf+sg = r(ahf+bgf)+sg = (ra)hf + (rbf+s)g \:. \qedhere \]
\end{proof}

\begin{lemma}\label{lem:cp2}
  Let $a,b,u,v\in S$ and let $f=a+b$, $g=u+v$.
  Suppose that 
  \[ 2b=0,\; \ (f,g)=S,\text{ and } (g,u)=S \:. \] 
  Then $ (fg, au\!+\!  bv) = (f, a)$.
\end{lemma}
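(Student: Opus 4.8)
The plan is to work with the two ideals $(fg,\,au+bv)$ and $(f,a)$ and establish the equality by proving inclusion in each direction, using Lemma~\ref{lem:cp1} repeatedly to perform ``elementary'' manipulations on ideal generators. The hypotheses $(f,g)=S$ and $(g,u)=S$ are precisely what is needed to invoke part~(b) of Lemma~\ref{lem:cp1}, which lets one multiply a generator by a unit-modulo-the-other-generator without changing the ideal; the hypothesis $2b=0$ is what will let the cross terms collapse. First I would record the obvious consequence that $g=u+v$ together with $(g,u)=S$ forces $(g,v)=S$ as well (since $(g,u)=(g,g-u)=(g,v)$ by Lemma~\ref{lem:cp1}(a) with a sign), and similarly $(f,a)=(f,b)$ because $f=a+b$. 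These little rewrites are the routine bookkeeping I would not spell out in full.

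For the inclusion $(fg,\,au+bv)\subseteq(f,a)$: note $fg\in(f)\subseteq(f,a)$, so it suffices to show $au+bv\in(f,a)$. Write $au+bv = au + b(g-u) = (a-b)u + bg$. Now $bg\in(f,b)=(f,a)$, and $(a-b)u$: since $a\in(f,a)$ and $2b=0$ we have $a-b\equiv a+b=f\pmod{2b}$... I would be a little more careful here and instead argue directly that $b\in(f,a)$ (from $(f,a)=(f,b)$, hence $b\in(f,a)$), so both $a$ and $b$ lie in $(f,a)$, whence $au+bv\in(f,a)$ trivially. So this direction is immediate once one knows $(f,a)=(f,b)$.

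For the reverse inclusion $(f,a)\subseteq(fg,\,au+bv)$, which is the substantive direction: the idea is to recover $f$ and $a$ (equivalently $f$ and $b$) from $fg$ and $au+bv$ using the coprimality hypotheses. Starting from $h:=au+bv$, multiply by suitable elements and add multiples of $fg$. Since $(g,v)=S$ pick $p,q$ with $pg+qv=1$; then $q h = q a u + q b v = qau + b(1-pg) = qau + b - pbg$, so $b \equiv qh \pmod{fg, \text{??}}$ — the term $qau$ is the obstruction. To kill $qau$, use $(g,u)=S$: there exist $c,d$ with $cg+du=1$. The clean way, which I expect to be the crux, is: from $h=au+bv$ and $f=a+b$ one computes $fg - \text{(something)}\cdot h$ to isolate a multiple of $b$ times a unit, using that $2b=0$ makes $(a+b)(u+v) = au+av+bu+bv$ and $av+bu$ is the piece to be related back to $h$ and $fg$. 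I would set up the single identity exhibiting $b$ (and then $f$) as an explicit $S$-combination of $fg$ and $au+bv$, with coefficients built from the Bézout data for $(g,v)=S$ and $(g,u)=S$, and check it collapses correctly modulo $2b=0$. Getting the coefficients of that identity exactly right — matching the cross terms $av+bu$ against $fg$ and $h$ — is the main obstacle; everything else is an application of Lemma~\ref{lem:cp1}.
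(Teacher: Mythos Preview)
Your easy inclusion $(fg,\,au+bv)\subseteq(f,a)$ is fine. The reverse inclusion, however, is a genuine gap: you flag ``getting the coefficients of that identity exactly right'' as the main obstacle and then stop. The missing idea is a single algebraic rewrite that uses $2b=0$ directly: since $bv=-bv$,
\[
  au+bv \;=\; au-bv \;=\; a(u+v)-(a+b)v \;=\; ag - fv .
\]
With this identity no explicit B\'ezout coefficients are needed at all. The paper does not argue by two inclusions; it transforms the ideal in a chain via Lemma~\ref{lem:cp1}. First one checks $(g,\,ag-fv)=(g,fv)=(g,v)=(g,u)=S$, using part~(a) and then part~(b) with the hypothesis $(f,g)=S$. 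Since this ideal is all of $S$, part~(b) applied with $h=g$ gives $(fg,\,ag-fv)=(f,\,ag-fv)$; two more steps, part~(a) to drop the $-fv$ and part~(b) with $(g,f)=S$ to strip the factor $g$, yield $(f,\,ag-fv)=(f,ag)=(f,a)$.

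Your plan of manufacturing explicit combinations from B\'ezout data for $(g,u)=S$ and $(g,v)=S$ can in principle be completed, but only after substituting $au+bv=ag-fv$; without that rewrite the cross term $qau$ in your sketch does not cancel against anything available, and the argument as written does not close. The moral is that $2b=0$ should be spent on rewriting the generator $au+bv$ itself, not on trying to simplify coefficients after the fact.
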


\begin{proof}
  First we observe $au+bv = au-bv = ag-fv$.  Next, using
  Lemma~\ref{lem:cp1}, we obtain $(g,ag\!-\!  fv) = (g,fv) = (g,v) =
  (g,u) = S$.  Hence, again using Lemma~\ref{lem:cp1},
  \[ (fg, au\!+\!  bv) = (fg, ag\!-\!  fv) = (f, ag\!-\! 
  fv) = (f, ag) = (f, a) \:. \qedhere \]
\end{proof}

We now specialize to the case that $S=R[z]$ where $R$ is a Galois ring
with residual field $K$.  The following is well-known.

\begin{lemma}\label{lem:cp3}
  Let $f,g$ be polynomials in $R[z]$, then $(f,g)=R[z]$ if and only if
  $(\mu f, \mu g)=K[z]$.
\end{lemma}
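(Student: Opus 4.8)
The plan is to prove the equivalence by relating both sides to quotient‑ring conditions that reduce to the same statement. First I would recall that $R[z]/(f,g) = 0$ is equivalent to $(f,g) = R[z]$, so the claim is really a statement about when the quotient ring vanishes. The forward implication is trivial: if $(f,g) = R[z]$ then $1 = pf + qg$ for some $p,q \in R[z]$, and applying the canonical reduction $\mu: R[z] \to K[z]$ (extended coefficientwise) gives $1 = (\mu p)(\mu f) + (\mu q)(\mu g)$, so $(\mu f, \mu g) = K[z]$.

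For the converse, the key observation is that $2R$ is the maximal ideal of $R$ and $2R[z]$ is contained in the radical-type ideal that makes Nakayama's lemma applicable. Concretely, suppose $(\mu f, \mu g) = K[z]$. Then there exist $p,q \in R[z]$ with $\mu(pf + qg) = 1$ in $K[z]$, which means $pf + qg = 1 - 2h$ for some $h \in R[z]$. Since $R$ has characteristic $4$, we have $(2h)^2 = 4h^2 = 0$, so $(1-2h)(1+2h) = 1 - 4h^2 = 1$; hence $1 - 2h$ is a unit in $R[z]$ with inverse $1 + 2h$. Multiplying $pf + qg = 1 - 2h$ by this inverse gives $1 \in (f,g)$, so $(f,g) = R[z]$. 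I would write the argument in this self‑contained form rather than invoking Nakayama, since the square‑zero trick for characteristic $4$ is cleaner and matches the elementary style of the surrounding lemmas.

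The main (and only) obstacle is ensuring that the element $2h$ arising from the lift is genuinely nilpotent in $R[z]$; this is where characteristic $4$ of $R$ is essential, and the statement would fail for a general Galois ring ${\rm GR}(p^m,\cdot)$ with $m > 2$ unless one iterates the argument or appeals to completeness/Nakayama. Since the paper has fixed $R = {\rm GR}(4,m)$ throughout, the one‑step square‑zero argument suffices, and no further work is needed.
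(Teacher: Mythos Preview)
Your argument is correct. The forward direction is immediate, and for the converse your square-zero trick $(1-2h)(1+2h)=1-4h^2=1$ is exactly the right move in characteristic~$4$; it cleanly produces a unit from the lift and finishes the proof.

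The paper itself gives no proof of this lemma: it is stated as ``well-known'' and left without argument. So there is nothing to compare against directly. That said, your lifting-plus-nilpotent-unit argument is precisely the standard one, and in fact the paper implicitly uses the same mechanism later (in the proof that $2\in(\Sigma_e,\Sigma_o)$), where it lifts a $K[z]$-relation to $a\tau+b\rho=1+\theta$ with $\theta\in 2R[z]$ and then multiplies by~$2$.

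One small quibble: in your final paragraph you write ``a general Galois ring ${\rm GR}(p^m,\cdot)$ with $m>2$'', but in the paper's notation $m$ is the extension degree, not the exponent of~$p$; you mean ${\rm GR}(p^s,m)$ with $s>2$. Even there the argument does not fail---one simply uses that $ph$ is nilpotent of index at most~$s$, so $1+ph$ has inverse $\sum_{i=0}^{s-1}(-ph)^i$---but your one-step version tailored to $s=2$ is indeed the cleanest fit for this paper.
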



Consider the polynomial
\[ \Sigma(z) := \prod_{i=1}^r (1-Y_iz)^{a_i} \in R[z] \:, \] for some
$a_i\in\{1,2\}$ and $Y_i\in R$ such that the $\mu Y_i\in K^{\times}$
are pairwise distinct.  We further let
\[ \tau = \prod_{i=1}^s(1-Y_iz)^2\quad\text{ and }\quad \varepsilon =
\prod_{i=s+1}^r (1-Y_iz) \] be the square and non-square part of
$\Sigma$ (under a suitable re-ordering of the $Y_i$ if necessary).  As
before, we denote the even and the odd part of a polynomial $f\in
R[z]$ by $f_e$ and $f_o$, respectively.

\begin{lemma}\label{lem:pre}
  Given the above notation, there holds $2\tau_o=0$,
  $(\tau,\varepsilon)=R[z]$, and $(\varepsilon,\varepsilon_e)=R[z]$.
\end{lemma}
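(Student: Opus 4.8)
The plan is to treat the three assertions in turn. The identity $2\tau_o = 0$ will follow at once from the observation that $\tau$ is a perfect square, using that $R$ has characteristic~$4$. The two coprimality statements will be reduced modulo~$2$ by Lemma~\ref{lem:cp3} to coprimality statements in $K[z]$, where $K$ has characteristic~$2$, and there they become short gcd arguments. The main obstacle is the last of these, where one must relate the even part of a polynomial over $K$ to its formal derivative.

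For the first assertion, put $p := \prod_{i=1}^s(1-Y_iz)$, so that $\tau = p^2 = p_e^2 + 2p_ep_o + p_o^2$. Since $p_e^2$ and $p_o^2$ involve only even-degree monomials while $p_ep_o$ involves only odd-degree ones, one reads off $\tau_o = 2p_ep_o$, and hence $2\tau_o = 4p_ep_o = 0$ in $R[z]$.

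For the second assertion, by Lemma~\ref{lem:cp3} it is enough to prove $(\mu\tau,\mu\varepsilon) = K[z]$. Now $\mu\tau = \prod_{i=1}^s(1-\mu Y_iz)^2$ and $\mu\varepsilon = \prod_{i=s+1}^r(1-\mu Y_iz)$ are products of linear --- hence irreducible --- polynomials of $K[z]$, and since the elements $\mu Y_i \in K^\times$ for $1\le i\le r$ are pairwise distinct, no irreducible factor of $\mu\tau$ is associate to one of $\mu\varepsilon$. As $K[z]$ is a principal ideal domain, two polynomials with no common irreducible factor generate the unit ideal, so $(\mu\tau,\mu\varepsilon) = K[z]$.

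For the third assertion, again by Lemma~\ref{lem:cp3}, and since reduction modulo~$2$ commutes with extracting the even part, it suffices to show $(q,q_e) = K[z]$ where $q := \mu\varepsilon$. Here I would use that in characteristic~$2$ one has $q + q_e = q_o$ (as $q_e + q_e = 0$), so $(q,q_e) = (q,q_o)$ by Lemma~\ref{lem:cp1}(a), and that the formal derivative annihilates all even-degree terms, so that $q_o = zq'$; thus $(q,q_e) = (q,zq')$. Since $q(0)=1$ we have $(z,q) = K[z]$, whence Lemma~\ref{lem:cp1}(b) gives $(q,zq') = (q,q')$. Finally $q = \prod_{i=s+1}^r(1-\mu Y_iz)$ splits over $K$ into pairwise non-associate linear factors and is therefore separable, so $(q,q') = K[z]$. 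By Lemma~\ref{lem:cp3} this yields $(\varepsilon,\varepsilon_e) = R[z]$, completing the proof.
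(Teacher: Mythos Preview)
Your proof is correct. The first two parts are essentially the paper's arguments, only written out more explicitly: the paper simply observes that $\mu\tau=\mu\tau_e$ because $\tau$ is a square, hence $2\tau_o=0$, and then invokes Lemma~\ref{lem:cp3} for $(\tau,\varepsilon)=R[z]$ exactly as you do.

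For the third assertion your route is genuinely different. The paper reduces to showing $\mu\varepsilon_e$ and $\mu\varepsilon_o$ are coprime and argues structurally: over the perfect field $K$ of characteristic~$2$, both $\mu\varepsilon_e$ and $\mu\varepsilon_o/z$ are squares, so any nontrivial common factor (other than $z$, which is ruled out by $\mu\varepsilon_e(0)=1$) would force a repeated factor in $\mu\varepsilon$, contradicting its square-freeness. You instead exploit the identity $q_o=zq'$ valid for any $q\in K[z]$ in characteristic~$2$, which turns $(q,q_e)=(q,q_o)=(q,zq')=(q,q')$ after peeling off the unit $z$ via Lemma~\ref{lem:cp1}(b), and then invoke separability of $q=\mu\varepsilon$ directly. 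Your argument is arguably cleaner, as it makes the connection to separability explicit via the derivative and avoids the ``both parts are squares'' observation; the paper's version, on the other hand, foreshadows Lemma~\ref{lem:square} and keeps the discussion uniformly in terms of even/odd decompositions. One small remark: the passage $(q,q_e)=(q,q_o)$ holds in any commutative ring simply because $q=q_e+q_o$, so your appeal to characteristic~$2$ there is unnecessary (though harmless).
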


\begin{proof}
  Since $\tau$ is a square, we have $\mu\tau = \mu\tau_e$. Thus
  $\mu\tau_o=0$ and hence $2\tau_o=0$.  Since $\mu\tau$ and
  $\mu\varepsilon$ have no common factors, we have
  $(\mu\tau,\mu\varepsilon) = K[z]$, and so, by Lemma~\ref{lem:cp3},
  we have $(\tau,\varepsilon) = R[z]$ .

  To show $(\varepsilon,\varepsilon_e)=R[z]$ we simply show that
  $\mu\varepsilon_e$ and $\mu\varepsilon_o$ are coprime.  First we
  note $\mu\varepsilon_e(0)=\mu\varepsilon(0)=1$, and hence $z$ is not
  a common factor of $\mu\varepsilon_o$ and $\mu\varepsilon_e$.
  Suppose now that a (proper) common factor of $\mu\varepsilon_e$ and
  $\mu\varepsilon_o$ exists.  Since both $\mu\varepsilon_e$ and
  $\mu\varepsilon_o/z$ are squares the fact that they have a common
  factor means they have a common factor that is also a square,
  contradicting the fact that $\mu\varepsilon$ is square-free.  Thus
  $\mu\varepsilon_e$ and $\mu\varepsilon_o$ are coprime, and hence, by
  Lemma~\ref{lem:cp3}, $(\varepsilon,\varepsilon_e) =
  (\varepsilon_e,\varepsilon_o) = R[z]$.
\end{proof}

\begin{corollary}\label{cor:tau}\
  $(\Sigma,\Sigma_e) = (\tau,\tau_e)$.
\end{corollary}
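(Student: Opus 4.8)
The plan is to recognize Corollary~\ref{cor:tau} as a direct instance of Lemma~\ref{lem:cp2}, with the three assertions of Lemma~\ref{lem:pre} supplying exactly its three hypotheses. So the work is essentially to set up the right substitution and to check one parity bookkeeping identity.

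First I would unwind the definitions. By construction $\Sigma = \tau\varepsilon$, since $\tau$ and $\varepsilon$ are the square and non-square parts of $\Sigma$. Writing $\tau = \tau_e + \tau_o$ and $\varepsilon = \varepsilon_e + \varepsilon_o$ and multiplying out, a product of two even parts or of two odd parts is again even, while each mixed product is odd; hence the even part of $\Sigma$ is $\Sigma_e = \tau_e\varepsilon_e + \tau_o\varepsilon_o$.

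Now I would apply Lemma~\ref{lem:cp2} over $S = R[z]$ with the assignment $a := \tau_e$, $b := \tau_o$, $u := \varepsilon_e$, $v := \varepsilon_o$, so that $f = a+b = \tau$, $g = u+v = \varepsilon$, $fg = \Sigma$, and $au+bv = \Sigma_e$ by the identity just noted. The three hypotheses $2b = 0$, $(f,g) = S$, and $(g,u) = S$ then read $2\tau_o = 0$, $(\tau,\varepsilon) = R[z]$, and $(\varepsilon,\varepsilon_e) = R[z]$, which are precisely the three statements established in Lemma~\ref{lem:pre}. Therefore the conclusion $(fg,\,au+bv) = (f,a)$ of Lemma~\ref{lem:cp2} becomes $(\Sigma,\Sigma_e) = (\tau,\tau_e)$, which is the claim.

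I do not expect any genuine obstacle here: the only point that needs a moment of care is the computation of the even part of the product $\tau\varepsilon$ and matching it cleanly to the $au+bv$ appearing in Lemma~\ref{lem:cp2}; everything of substance has already been absorbed into Lemmas~\ref{lem:cp2} and~\ref{lem:pre}.
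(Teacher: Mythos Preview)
Your proposal is correct and follows essentially the same approach as the paper: observe $\Sigma_e=\tau_e\varepsilon_e+\tau_o\varepsilon_o$, then apply Lemma~\ref{lem:cp2} with the hypotheses supplied by Lemma~\ref{lem:pre}. The only difference is that you spell out the substitution and the parity computation in more detail than the paper does.
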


\begin{proof}
  We observe that $\Sigma_e =
  \tau_e\varepsilon_e+\tau_o\varepsilon_o$.  Combining
  Lemma~\ref{lem:cp2} and Lemma~\ref{lem:pre} we obtain $(\Sigma,
  \Sigma_e) = (\tau\varepsilon, \tau_e\varepsilon_e\!+\!
  \tau_o\varepsilon_o) = (\tau,\tau_e)$.
\end{proof}

\begin{lemma}\label{lem:square}
  Let $f,g\in R[z]$ be squares.  Then $(fg)_e=f_eg_e$.
\end{lemma}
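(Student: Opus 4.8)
The plan is to write each of the squares as $f=p^2$ and $g=q^2$ with $p,q\in R[z]$, and then reduce the whole statement to bookkeeping of even and odd parts of products. First I would record the elementary parity facts: a product of two even (or of two odd) polynomials is even, while a product of an even and an odd polynomial is odd. Consequently, for any $h\in R[z]$ written as $h=h_e+h_o$, the expansion $h^2=h_e^2+2h_eh_o+h_o^2$ splits into its even and odd parts as $(h^2)_e=h_e^2+h_o^2$ and $(h^2)_o=2h_eh_o$. By the same reasoning, for a product $pq$ one has $(pq)_e=p_eq_e+p_oq_o$ and $(pq)_o=p_eq_o+p_oq_e$.

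Next I would apply these identities three times. Applied to $p$ and to $q$ they give $f_e=p_e^2+p_o^2$ and $g_e=q_e^2+q_o^2$, hence
\[ f_eg_e = p_e^2q_e^2 + p_e^2q_o^2 + p_o^2q_e^2 + p_o^2q_o^2 \:. \]
On the other hand $fg=(pq)^2$, so applying the first identity to the polynomial $pq$ yields
\[ (fg)_e = \bigl((pq)_e\bigr)^2 + \bigl((pq)_o\bigr)^2 = (p_eq_e+p_oq_o)^2 + (p_eq_o+p_oq_e)^2 \:. \]
Expanding the two squares on the right produces exactly the four terms $p_e^2q_e^2$, $p_o^2q_o^2$, $p_e^2q_o^2$, $p_o^2q_e^2$ occurring in $f_eg_e$, together with the cross terms $2p_eq_ep_oq_o$ from the first square and $2p_eq_op_oq_e$ from the second, which jointly contribute $4\,p_ep_oq_eq_o$.

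Finally I would invoke that $R$, and hence $R[z]$, has characteristic $4$, so the surviving interference term $4\,p_ep_oq_eq_o$ vanishes, leaving $(fg)_e = p_e^2q_e^2 + p_o^2q_o^2 + p_e^2q_o^2 + p_o^2q_e^2 = f_eg_e$, as claimed. I do not expect any genuine obstacle here; the only point worth flagging is that one must \emph{not} argue by halving (since $2$ is a zero divisor in $R$) but track coefficients directly, the whole content being that the one term obstructing multiplicativity of the even part carries the factor $4$ and is therefore killed by the characteristic.
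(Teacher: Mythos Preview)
Your proof is correct but proceeds differently from the paper. The paper applies the identity $(fg)_e = f_e g_e + f_o g_o$ once, directly to $f$ and $g$, and then kills the term $f_o g_o$ by observing that any square $f$ satisfies $2f_o = 0$: reducing modulo $2$, $\mu f$ is a square in the characteristic-$2$ ring $K[z]$ and hence has vanishing odd part, so $f_o \in 2R[z]$, and likewise $g_o \in 2R[z]$, whence $f_o g_o \in 4R[z] = 0$. Your argument instead introduces square roots $f=p^2$, $g=q^2$ and tracks all parity pieces explicitly until the obstruction surfaces as a visible factor of $4$. This is more computational, but it has the advantage of staying entirely inside $R[z]$: in particular your identity $(h^2)_o = 2h_e h_o$ already gives $f_o = 2p_e p_o$, which proves the paper's key step $2f_o = 0$ directly, without any appeal to the residue field.
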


\begin{proof}
  We have $(fg)_e = f_eg_e+f_og_o$.  Since $f$ and $g$ are squares, as
  in the proof of Lemma~\ref{lem:pre}, it follows that $2f_o = 2g_o =
  0$, and hence $f_og_o=0$.
\end{proof}

\begin{corollary}\label{cor:taue}\
  $\tau_e=\prod_{i=1}^s(1+Y_i^2z^2)$.
\end{corollary}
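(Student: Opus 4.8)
The plan is to expand each factor of $\tau$ and then invoke Lemma~\ref{lem:square} inductively. Writing $(1-Y_iz)^2 = 1 - 2Y_iz + Y_i^2z^2$, the even part of this single factor is evidently $1 + Y_i^2z^2$, the middle term $-2Y_iz$ being the odd part (note that $2Y_iz$ need not vanish in $R[z]$, but that is irrelevant for extracting even-degree coefficients). So the statement reduces to showing that the even-part operator is multiplicative on the factors of $\tau$, i.e.\ that $\bigl(\prod_{i=1}^s(1-Y_iz)^2\bigr)_e = \prod_{i=1}^s\bigl[(1-Y_iz)^2\bigr]_e$.

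This multiplicativity follows from Lemma~\ref{lem:square} by induction on $s$. The base case $s=1$ is the computation of the previous paragraph. For the inductive step I would set $f := \prod_{i=1}^{s-1}(1-Y_iz)^2$ and $g := (1-Y_sz)^2$; both are squares in $R[z]$ — $f$ because a product of squares is a square — so Lemma~\ref{lem:square} applies and yields $\tau_e = (fg)_e = f_eg_e$. Substituting the induction hypothesis $f_e = \prod_{i=1}^{s-1}(1+Y_i^2z^2)$ together with $g_e = 1+Y_s^2z^2$ then gives $\tau_e = \prod_{i=1}^s(1+Y_i^2z^2)$, completing the induction.

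There is no genuine obstacle here; the only point that needs a little care is that Lemma~\ref{lem:square} must be applied strictly to pairs of squares, which is precisely why the induction is organised around the partial product $\prod_{i=1}^{s-1}(1-Y_iz)^2$ (which is itself a square) rather than by manipulating the factors in some other order that could destroy the square property. An alternative, more computational route would be to observe directly that $\sigma_e = \frac12\bigl(\sigma(z)+\sigma(-z)\bigr)$ and expand the product, but the route via Lemma~\ref{lem:square} is cleaner and reuses machinery already established.
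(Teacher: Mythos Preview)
Your argument is correct and is exactly the approach the paper intends: the corollary is stated without proof because it follows from Lemma~\ref{lem:square} by the straightforward induction you give, together with the one-factor computation $(1-Y_iz)^2 = 1 - 2Y_iz + Y_i^2z^2$.

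One small caveat on your closing aside: the ``alternative route'' via $\sigma_e = \tfrac12\bigl(\sigma(z)+\sigma(-z)\bigr)$ is not actually available here, since $2$ is a zero-divisor in $R = {\rm GR}(4,m)$ and has no inverse. That is part of the reason Lemma~\ref{lem:square} is proved the way it is (via $2f_o = 2g_o = 0$) rather than by the characteristic-zero identity.
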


With these preparations we can prove:

\begin{proposition}\
  $2\in (\Sigma_e,\Sigma_o)$.
\end{proposition}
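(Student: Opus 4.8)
The plan is to reduce everything, via the ideal manipulations already proved, to a statement about the square part $\tau$ alone, and then to finish by exploiting the coprimality of the even and odd parts of a ``square root'' of $\tau$.

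First I would observe that $\Sigma=\Sigma_e+\Sigma_o$, so Lemma~\ref{lem:cp1}(a) gives $(\Sigma_e,\Sigma_o)=(\Sigma_e,\,\Sigma_e+\Sigma_o)=(\Sigma_e,\Sigma)=(\Sigma,\Sigma_e)$, and then Corollary~\ref{cor:tau} yields $(\Sigma_e,\Sigma_o)=(\tau,\tau_e)$. Hence it suffices to prove $2\in(\tau,\tau_e)$. Next, set $P:=\prod_{i=1}^s(1-Y_iz)$, so that $\tau=P^2$. Expanding $P^2=P_e^2+2P_eP_o+P_o^2$ and noting that $P_e^2$ and $P_o^2$ are even while $P_eP_o$ is odd, one reads off $\tau_e=P_e^2+P_o^2$ and $\tau=\tau_e+2P_eP_o$; applying Lemma~\ref{lem:cp1}(a) once more gives $(\tau,\tau_e)=(2P_eP_o,\,\tau_e)$.

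The heart of the argument --- and the step I expect to cost the most --- is to show that $(P_e,\tau_e)=R[z]$ and $(P_o,\tau_e)=R[z]$. For this I would first argue, exactly as in the second half of the proof of Lemma~\ref{lem:pre}, that $\mu P_e$ and $\mu P_o$ are coprime in $K[z]$: since $\mu P=\prod(1+\mu Y_iz)$ is square-free, and since over the perfect field $K$ the even part of any polynomial and $z^{-1}$ times its odd part are both squares, any common factor of $\mu P_e$ and $\mu P_o$ would produce a square common factor of $\mu P$ (the constant term $\mu P_e(0)=1$ rules out $z$ itself). By Lemma~\ref{lem:cp3} this gives $(P_e,P_o)=R[z]$, hence also $(P_e,P_o^2)=R[z]$, and therefore $(P_e,\tau_e)=(P_e,\,P_e^2+P_o^2)=(P_e,P_o^2)=R[z]$ by Lemma~\ref{lem:cp1}(a); symmetrically $(P_o,\tau_e)=R[z]$.

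Finally I would invoke Lemma~\ref{lem:cp1}(b) twice: using $(P_e,\tau_e)=R[z]$ we obtain $(2P_eP_o,\tau_e)=(2P_o,\tau_e)$, and then using $(P_o,\tau_e)=R[z]$ we obtain $(2P_o,\tau_e)=(2,\tau_e)$. Therefore $2\in(2,\tau_e)=(\tau,\tau_e)=(\Sigma_e,\Sigma_o)$, which is exactly the claim.
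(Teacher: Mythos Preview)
Your argument is correct, and it takes a genuinely different route from the paper's. Both proofs begin by reducing to the square part via $(\Sigma_e,\Sigma_o)=(\tau,\tau_e)$, but from there they diverge. The paper passes to $(\tau,\tau_o)$, writes $\tau_o=2\rho$ using $2\tau_o=0$, and then shows $(\mu\tau,\mu\rho)=K[z]$ by evaluating $\tau_e$ at the roots $Y_j^{-1}$ of $\tau$ via the explicit formula $\tau_e=\prod_{i=1}^s(1+Y_i^2z^2)$ of Corollary~\ref{cor:taue}; a lift then gives $2\in(\tau,2\rho)$. You instead introduce the square root $P=\prod_{i=1}^s(1-Y_iz)$, read off $\tau_e=P_e^2+P_o^2$ and $\tau_o=2P_eP_o$, and reduce everything to the coprimality of $P_e$ and $P_o$, which you obtain by re-running the square-free argument of Lemma~\ref{lem:pre}. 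Your approach is a bit more structural: it avoids the root-by-root evaluation and does not need Corollary~\ref{cor:taue} at all, at the cost of invoking Lemma~\ref{lem:cp1}(b) twice. The paper's approach, on the other hand, makes the obstruction visible concretely (one sees exactly why $\mu\rho$ cannot vanish at $\mu Y_j^{-1}$) and uses the already-established $2\tau_o=0$ more directly.
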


\begin{proof}
  Observe first that $(\Sigma_e,\Sigma_o) = (\Sigma,\Sigma_e)$ and
  $(\tau,\tau_e)=(\tau,\tau_o)$. Then by Corollary~\ref{cor:tau} it
  suffices to show that $2\in (\tau,\tau_o)$.  Since $2\tau_o=0$ we
  may write $\tau_o=2\rho$ for some regular polynomial $\rho \in
  R[z]$.

  We show that $(\mu\tau, \mu\rho)=K[z]$.  Clearly, the polynomial
  $\mu\tau$ fully splits into linear factors over $K$; its roots are
  $\mu Y_j^{-1}$, $j=1\dots s$.  On the other hand we show that for
  all $j$ we have $\mu\rho(\mu Y_j^{-1})\ne 0$. Using
  Corollary~\ref{cor:taue} we find that
  \[\tau_e(Y_j^{-1}) = 2\prod_{i=1\,,\,i\ne j}^s 
  \left(1+(Y_i\, Y_j^{-1})^2\right) \ne 0\:,\] since
  $\mu(1+Y_i\, Y_j^{-1})\ne 0$ for $i\ne j$.  Hence,
  $\tau_o(Y_j^{-1}) = \tau(Y_j^{-1}) - \tau_e(Y_j^{-1}) =
  -\tau_e(Y_j^{-1}) \ne 0$, and this implies $\mu\rho(\mu Y_j^{-1})\ne
  0$.

  Now, since $(\mu\tau, \mu\rho)=K[z]$ there are $\overline{a},
  \overline{b}\in K[z]$ such that $\overline{a}\,\mu\tau +
  \overline{b}\,\mu\rho = 1$.  Choose $a,b\in R[z]$ with $\mu
  a=\overline{a}$, $\mu b=\overline{b}$.  Then $a\tau + b\rho =
  1+\theta$ for some $\theta\in 2R[z]$, and thus $2a\tau + 2b\rho =
  2$. This proves that $2\in (\tau,2\rho) = (\tau,\tau_o)$.
\end{proof}

\begin{corollary}\label{cor:coprime}\
  $2 \in (\varphi,\omega)$.
\end{corollary}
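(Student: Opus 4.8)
The plan is to recognise the error locator polynomial $\sigma$ of~(\ref{eqsigma}) as a particular instance of the polynomial $\Sigma$ treated in the preceding proposition, and then to transfer the relation $2\in(\Sigma_e,\Sigma_o)$ over to $\varphi$ and $\omega$ by means of the substitution $z\mapsto z^2$.

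\emph{Step 1 (identifying $\sigma$ with a $\Sigma$).} Discarding from $\sigma=\prod_{i=0}^{n-1}(1-X_iz)^{w(e_i)}$ the trivial factors coming from positions with $e_i=0$, and enumerating the error locations as $i_1,\dots,i_r\in\{0,\dots,n-1\}$, I put $Y_k:=X_{i_k}$ and $a_k:=w(e_{i_k})$, so that $\sigma=\prod_{k=1}^r(1-Y_kz)^{a_k}$. Since the Lee weights of $1,2,3\in\Z_4$ are $1,2,1$, we have $a_k\in\{1,2\}$. Each $Y_k=\pm\alpha^{i_k}$ is a unit of $R$, and as $-1$ reduces to $1$ in the characteristic-two field $K$ we get $\mu Y_k=\mu(\alpha)^{i_k}$. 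Now $\mu(\alpha)=\mu(-\beta)=\mu(\beta)$, and because $\mu$ maps the Teichm\"uller set bijectively onto $K$ the order of $\mu(\beta)$ equals the order of $\beta$, namely $n$. As the exponents $i_k$ lie in $\{0,\dots,n-1\}$, the reductions $\mu Y_k$ are therefore pairwise distinct elements of $K^{\times}$. Hence $\sigma$ satisfies the hypotheses placed on $\Sigma$, and the proposition above yields $2\in(\sigma_e,\sigma_o)$.

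\emph{Step 2 (descending to the $z^2$-variable).} Write $2=p\,\sigma_e+q\,\sigma_o$ with $p,q\in R[z]$. Since $\sigma_e$ is even and $\sigma_o$ is odd, taking even parts on both sides gives $2=p_e\,\sigma_e+q_o\,\sigma_o$, so replacing $(p,q)$ by $(p_e,q_o)$ we may assume $p$ is even and $q$ is odd, say $p(z)=P(z^2)$ and $q(z)=z\,Q(z^2)$ with $P,Q\in R[z]$. Then
\[
  2 \;=\; P(z^2)\,\sigma_e(z) \;+\; Q(z^2)\,\bigl(z\,\sigma_o(z)\bigr).
\]
By~(\ref{eqphiomega}) we have $\sigma_e(z)=\omega(z^2)$ and $z\,\sigma_o(z)=\varphi(z^2)-\omega(z^2)$, whence
\[
  2 \;=\; \bigl(P(z^2)-Q(z^2)\bigr)\,\omega(z^2) \;+\; Q(z^2)\,\varphi(z^2).
\]
This is an identity in the subring $R[z^2]\subseteq R[z]$, which is the image of the injective $R$-algebra homomorphism $R[y]\to R[z]$, $y\mapsto z^2$; pulling it back along this map gives $2=(P-Q)\,\omega+Q\,\varphi$, that is, $2\in(\varphi,\omega)$.

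\emph{Anticipated obstacle.} The only step that genuinely needs care is the pairwise distinctness of the reductions $\mu Y_k$ in Step~1, which rests on pinning down the multiplicative order of $\mu(\alpha)$ in $K^{\times}$; the substitution argument and the even/odd bookkeeping in Step~2 are routine, since the substantive content has already been established in the proposition above.
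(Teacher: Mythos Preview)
Your proof is correct and follows essentially the same approach as the paper: identify $\sigma$ as an instance of $\Sigma$ to obtain $2\in(\sigma_e,\sigma_o)$, then descend to $(\varphi,\omega)$ via the substitution $z\mapsto z^2$ together with an even-part argument. The only presentational difference is that you normalise $p,q$ to be even/odd at the outset, whereas the paper first rewrites $(\varphi(z^2),\omega(z^2))=(\sigma_o,\sigma_e)$ using $(z,\sigma_e)=R[z]$ and only then takes even parts.
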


\begin{proof}
  It is clear that $\sigma$ has the same form as $\Sigma$, defined
  before, and thus $2 \in (\sigma_o,\sigma_e)$.  Moreover
  \[ (\varphi(z^2),\omega(z^2)) = (\sigma_e(z)+z\sigma_o(z),\sigma_e(z))=
  (z\sigma_o(z),\sigma_e(z)) = (\sigma_o,\sigma_e) \:, \] since
  $(z,\sigma_e) = R[z]$.  As $2\in(\sigma_o,\sigma_e)$ there exist
  $a,b\in R[z]$ such that $a\,\varphi(z^2) + b\,\omega(z^2) = 2$. It
  follows $a_e\,\varphi(z^2) + b_e\,\omega(z^2) = 2$.  Therefore we have
  $u\varphi + v\omega = 2$ with $u,v\in R[z]$ such that $u(z^2)=a_e$ and
  $v(z^2)=b_e$.
\end{proof}

\begin{remark}\label{rem:coprime}\
  Suppose that no `double-errors' occurred, i.e., there is no position
  $j$ with $e_j=2$.  Then we have $\tau = 1$, and by
  Corollary~\ref{cor:tau}, we have $(\sigma, \sigma_e) = (\tau,
  \tau_e) = R[z]$.  From this it follows $(\varphi, \omega) = R[z]$, as
  before.
\end{remark}

\section{The Solution Module of the Key Equation}

In this section we investigate the module of solutions to the key
equation~(\ref{eqkey}), $M=\{[a,b]\in R[z]^2\mid a(1+T)\equiv b\mod
z^{t+1}\}$.  First we recall some basic facts on Gr\"obner basis in
$R[z]^2$, further details can be found in \cite{AL94,BF02,BM09}.

\begin{definition}
  Let $\ell$ be an integer. We define a term order $<_{\ell}$ on
  $R[z]^2$ by
  \begin{enumerate}[(a)]
  \item\ $[z^i,0] <_{\ell} [z^j,0]$ and $[0,z^i] <_{\ell} [0,z^j]$ for
    $i<j$,
  \item\ $[0,z^j] <_\ell [z^i,0]$ if and only if $j \le i + \ell.$
  \end{enumerate}
\end{definition}

Let $<$ denote an arbitrary fixed term order. Let $[a,b] \in
R[z]^2\setminus\{0\}$. Then $[a,b]$ has a unique expression as a sum
of monomials $[a,b] = \sum_{i \in I} c_i[z^i, 0] + \sum_{j \in J}
d_j[0, x^j]$ for some finite index sets $I, J$ of nonnegative
integers, and elements $c_i,d_j \in R\setminus\{0\}$. The {\em leading
  term}, $\mathrm{lt}[a, b]$, of $[a, b]$ is then identified as the
greatest term occurring in the above sum with respect to $<$.  The
{\em leading coefficient}, denoted ${\mathrm lc}[a,b]$, is the
coefficient attached to $\mathrm{lt}[a,b]$ and the {\em leading
  monomial} is ${\mathrm lm}[a,b] = {\mathrm lc}[a,
b]\mathrm{lt}[a,b]$.  For any $[a, b],[c,d] \in R[z]^2$ we say that
$[a, b] \preceq [c,d]$ if and only if $\mathrm{lt}[a,b] \le
\mathrm{lt}[c, d]$. Given a set of non-zero elements of $R[z]^2$ there
exists in the set a (not necessarily unique) minimal element with
respect to the quasi-order $\preceq$ associated with $<$. We will
refer to this element as being minimal with respect to $<$.

We say that $\mathrm{lt}[a,b]$ is on the {\em left} (resp.\ {\em
  right}) if $\mathrm{lt}[a,b] = [z^i,0]$ (resp.\
$\mathrm{lt}[a,b]=[0,z^i]$) for some non-negative integer $i$.  A
subset~$\mathcal{B}$ of a submodule~$A$ of $R[z]^2$ is called {\em
  Gr\"obner basis}, if for all $\alpha\in A$ there exists $\beta\in
\mathcal{B}$ such that ${\mathrm lm}(\beta)$ divides ${\mathrm
  lm}(\alpha)$.  The structure of a Gr\"obner basis in $R[z]^2$ is
given by the following lemma (cf.~\cite[Th.\,V.3]{BF02}).

\begin{lemma}\label{lem:gb}
  Let $A$ be a submodule of $R[z]^2$. Suppose that $A$ has elements
  with leading terms on the left and elements with leading terms on
  the right. Then $A$ has a (not necessarily minimal) Gr\"obner basis
  of the form
  \[ \big\{ [a,b], [c,d], [g,h], [u,v] \big\} \] with ${\mathrm
    lm}[a,b]=[z^{i},0]$, ${\mathrm lm}[c,d]=[2z^j,0]$, ${\mathrm
    lm}[g,h]=[0,z^r]$, ${\mathrm lm}[u,v]=[0,2z^s]$ satisfying $i \ge
  j $ and $r \ge s$.  Moreover, the integers $i,j,r,s$ are uniquely
  determined.
\end{lemma}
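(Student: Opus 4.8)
The plan is to prove Lemma~\ref{lem:gb} by a constructive reduction argument, building the four required Gröbner basis elements as minimal members of suitable subsets of~$A$ and then verifying the divisibility condition.

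First I would introduce the four relevant subsets of $A$: let $A_L$ be the set of elements of $A$ whose leading term is on the left and whose leading coefficient is a unit, let $A_L'$ be those with leading term on the left and leading coefficient in $2R\setminus\{0\}$ (a zero divisor), and analogously $A_R$, $A_R'$ for the right. Using the hypothesis that $A$ contains elements with leading terms on the left and on the right, together with multiplication by $2$, one checks each of these four sets is nonempty; for instance, if $[a,b]\in A$ has leading term on the left with leading coefficient a zero divisor, then it already lies in $A_L'$, while if the leading coefficient is a unit it lies in $A_L$, and in either case $2[a,b]$ contributes to $A_L'$ (after noting $2\cdot(2r)=0$ forces the leading term of $2[a,b]$ to be genuinely on the left when it is nonzero). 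Then I pick $[a,b]$ minimal in $A_L$ with respect to $<$, $[c,d]$ minimal in $A_L'$, $[g,h]$ minimal in $A_R$, and $[u,v]$ minimal in $A_R'$, so that by construction their leading monomials are $[z^i,0]$, $[2z^j,0]$, $[0,z^r]$, $[0,2z^s]$ respectively (after scaling the unit leading coefficients to~$1$, which is harmless).

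Next I would prove the inequalities $i\ge j$ and $r\ge s$: if $j>i$, then $2z^{j-i}[a,b]$ has leading monomial $[2z^j,0]$ and lies in $A_L'$, contradicting minimality of $[c,d]$ unless $j\le i$; symmetrically for $r,s$. The heart of the argument is then showing $\mathcal{B}=\{[a,b],[c,d],[g,h],[u,v]\}$ is a Gröbner basis, i.e.\ that for every nonzero $\gamma\in A$ some leading monomial in $\mathcal{B}$ divides $\mathrm{lm}(\gamma)$. I would argue by reducing $\gamma$: if $\mathrm{lt}(\gamma)$ is on the left, say $\mathrm{lm}(\gamma)=cz^k[1,0]$ or $2cz^k[1,0]$ — in the unit-coefficient case we have $k\ge i$ by minimality of $[a,b]$ (else $\gamma$ would be a strictly smaller element of $A_L$), so $[z^i,0]$ divides it; in the zero-divisor case $k\ge j$ by minimality of $[c,d]$, so $[2z^j,0]$ divides it. The symmetric analysis handles leading terms on the right. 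The subtlety I expect to be the main obstacle is making the minimality arguments airtight in the presence of zero divisors: one must be careful that subtracting an appropriate multiple $r z^{k-i}[a,b]$ from $\gamma$ to cancel the leading term genuinely produces an element of $A$ that is strictly smaller in the quasi-order $\preceq$, and that the leading coefficient bookkeeping (unit versus element of $2R$) behaves correctly — in particular that one cannot "escape" from $A_L'$ to $A_L$ or vice versa in a way that breaks the induction.

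Finally, uniqueness of the integers $i,j,r,s$: these are characterized intrinsically as $i=\min\{k : A$ contains an element with leading monomial $z^k[1,0]$ up to a unit$\}$, and similarly $j,r,s$ are the minimal exponents for leading monomials $2z^k[1,0]$, $z^k[0,1]$, $2z^k[0,1]$; since the Gröbner basis property forces every element's leading monomial to be divisible by one of the four, and conversely each of $[a,b],[c,d],[g,h],[u,v]\in A$ realizes its exponent, these minima are independent of the chosen basis. I would remark that this structural lemma is essentially \cite[Th.\,V.3]{BF02} adapted to $R[z]^2$, so the proof can be kept brief, citing that source for the routine verifications and emphasizing only the zero-divisor bookkeeping that is special to the Galois ring setting.
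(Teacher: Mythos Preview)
The paper does not supply its own proof of this lemma; it simply cites \cite[Th.\,V.3]{BF02}. Your sketch is therefore already more than the paper provides, and it follows the natural strategy one would expect in that reference: stratify $A$ by the side of the leading term and by whether the leading coefficient is a unit or lies in $2R$, choose minimal representatives in each stratum, verify $i\ge j$ and $r\ge s$ via multiplication by~$2$, check the divisibility condition directly, and read off uniqueness of $i,j,r,s$ as minimal exponents. Since you already intend to defer the routine verifications to \cite[Th.\,V.3]{BF02}, your treatment is consistent with the paper's.

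One point does need attention: you assert that all four sets $A_L,A_L',A_R,A_R'$ are nonempty, but the hypothesis only guarantees elements of $A$ with leading term on each side, not necessarily with \emph{unit} leading coefficient. Multiplying by~$2$ populates $A_L'$ and $A_R'$ from $A_L$ and $A_R$, not the other way round, so nonemptiness of $A_L$ and $A_R$ does not follow from what you wrote. Indeed for $A=2R[z]^2$ the hypothesis of the lemma is satisfied yet no element has leading monomial $[z^i,0]$ with unit coefficient, so the lemma as printed is slightly loose. In the intended application this is harmless, since the solution module $M$ contains both $[z^{t+1},0]$ and $[0,z^{t+1}]$, which furnish unit-leading-coefficient elements on the left and on the right; you should either insert this remark or state explicitly that $A_L,A_R\ne\varnothing$ is being assumed, deferring to~\cite{BF02} for the fully general formulation.
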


In \cite[Sec.\,VI]{BF02} an efficient algorithm to compute a Gr\"obner
basis for a submodule $M$ of the form $M=\{ [a,b] \in R[z]^2 \mid a\,U
\equiv b \mod z^r\}$, for some $U \in R[z]$ is given, the so-called
method of Solution by Approximations. This algorithm generalizes one
for the finite field case, derived in \cite{F95}, which can be viewed
as the Gr\"obner basis equivalent of the Berlekamp-Massey algorithm
\cite[Alg.\,7.4]{B68}.  The Solution by Approximations method works
by computing iteratively a Gr\"obner basis of each successive solution
module $M^{(k)}=\{[a,b] \in R[z]^2 \mid a\,U \equiv b \mod z^k\}$,
finally arriving at a basis of $M=M^{(r)}$. The algorithm requires no
searching at any stage of its implementation and has complexity
quadratic in $r$.

We describe this method below, which is particularly simple for the
case of the Galois ring $R$ of characteristic $4$.  As we said before,
the algorithm works by computing iteratively a Gr\"obner basis of each
successive solution module $M^{(k)}=\{[a,b] \in R[z]^2 \mid a\,U
\equiv b \mod z^k\}$, finally arriving at a basis of
$M=M^{(r)}$. Then, the algorithm is basically a method to give the
basis $\mathcal{B}_{k+1} = \{[f'_1,g'_1],\dots,[f'_4,g'_4]\}$ knowing
the basis $\mathcal{B}_k= \{[f_1,g_1],\dots,[f_4,g_4]\}$.

For $\alpha, \beta\in R$ we say that $\alpha$ is a {\em multiple} of
$\beta$ if there exists $x\in R$ such that $\alpha = x \beta$.  This
holds precisely when $\beta\in R^{\times}$ or $\alpha, \beta\in 2R$,
$\beta\ne 0$.

\begin{algorithm}[The Method of Solution by Approximations]\label{alg:sba}
  ~\\
  \indent Input: $U\in R[z]$, $r\in\mathbb{N}$

  Output: A Gr\"obner basis as in Lemma~\ref{lem:gb} of the solution
  module $M=\{[a,b]\in R[z]^2\mid a\,U\equiv b\mod z^r\}$.

  \begin{enumerate}
  \item Let $\mathcal{B}_0 := \{[1,0],[2,0],[0,1],[0,2]\}$ be the
    initial basis of $M^{(0)}$.
	
  \item For each $[f_i,g_i] \in \mathcal{B}_k$, compute the $k^{th}$
    discrepancies $\zeta_i = [f_iU-g_i]_k$, where $[\,\cdot\,]_k$
    denotes the $k$th polynomial coefficient.

  \item For each $[f_i,g_i] \in \mathcal{B}_k$, obtain an element
    $[f'_i,g'_i] \in \mathcal{B}_{k+1}$ as follows.
    \begin{enumerate}
    \item If $\zeta_i = 0$ then $[f'_i,g'_i] := [f_i,g_i]$.
    \item If $\zeta_i \ne 0$ and there is some $[f_j,g_j] \in
      \mathcal{B}_k$ with $\mathrm{lt}[f_j,g_j] <_{\ell}
      \mathrm{lt}[f_i,g_i]$ such that $\zeta_i$ is a multiple of
      $\zeta_j$ then
      \[ [f'_i,g'_i] := [f_i,g_i] - x [f_j,g_j] \:, \] where $\zeta_i
      = x\zeta_j$ (if $\zeta_j\in R^*$ we take
      $x=\zeta_i\zeta_j^{-1}$, and if $\zeta_j=2\varepsilon_j\ne 0$,
      $\zeta_i=2\varepsilon_i$ we take $x=\varepsilon_i\varepsilon_j^{-1}$).
    \item Otherwise, let $[f'_i,g'_i] := [zf_i,zg_i]$.
    \end{enumerate} 

  \item Repeat Steps 2 and 3 for $k=0,\dots,r-1$.

  \item Output $\mathcal{B}_r$.
  \end{enumerate}
  
\end{algorithm}

\begin{example}  
  Let $R = {\rm GR}(4,2) = {\mathbb Z}_4[x]/\langle x^2\!+\!  x\!+\!
  1\rangle$ and let $\alpha=[x]\in R$.  We use Algorithm~\ref{alg:sba}
  to find a Gr\"obner basis of \[ M = \{ [a,b] \in R[z]^2 \mid a\,
  ((3\alpha\!+\!  3)z\!+\!  1) \equiv b\mod z^2\} \] with respect to
  the term order $<_{\ell} \,=\, <_{-1}$.  Hence, $U=(3\alpha+3)z+1$.

  The initial ordered basis of $M^{(0)}$ is
  \[ \mathcal{B}_0 = \{[1,0],[2,0],[0,1],[0,2]\} \:. \] We compute the
  discrepancy for every element in $\mathcal{B}_0$ and find
  $[1,2,3,1]$.  Now, as $[1,0] <_{\ell} [0,1]$ we get $[0,1] -
  \frac{3}{1}[1,0] = [1,1]$ as a new basis element.  Similarly, as
  $[1,0] <_{\ell} [0,2]$ we get $[0,2] - \frac{2}{1}[1,0] = [2,2]$.
  From $[1,0]$ and $[2,0]$ we further get $[z,0]$ and $[2z,0]$.  So,
  the new basis is, after reordering,
  \[ \mathcal{B}_1 = \{[1,1],[2,2],[z,0],[2z,0]\} \:. \] Now, the new
  discrepancies are $[3\alpha \!+\! 3,2\alpha \!+\! 2,1,2]$.  As
  $[1,1]<_{\ell}[z,0]$ and $[1,1]<_{\ell}[2z,0]$ we get new basis
  elements $[z,0]-\frac{1}{3\alpha+3}[1,1] = [z+3\alpha, 3\alpha]$ and
  $[2z,0]-\frac{2}{3\alpha+3}[1,1] = [2z+2\alpha, 2\alpha]$, and from
  $[1,1]$ and $[2,2]$ we get $[z,z]$ and $[2z,2z]$.  Thus finally, the
  founded basis is \[ \mathcal{B}_2 = \{[z\!+\!  3\alpha,3\alpha], [2z\!+\! 
  2\alpha,2\alpha], [z,z], [2z,2z]\} \:. \]
\end{example}

In the next result we establish the minimality of $[\varphi,\omega]$
among the regular elements of the solution module of the key equation
(\ref{eqkey}) with respect to the term order $<_{-1}$.  

\begin{theorem}\label{thm:gb}
  Let $M=\{[a,b] \in R[z]^2\mid a(1\!+\!  T)\equiv b \mod z^{t+1} \}$.
  Let $[a,b] \in M$ such that $\delta a\le \frac{t+1}{2}$, $\delta
  b\le \frac{t}{2}$, and $2 \in (a,b)$.  Suppose further that
  ${\mathrm lc}(a)\in R^{\times}$ if $\delta a > \delta b$ and
  ${\mathrm lc}(b)\in R^{\times}$ if $\delta a \le \delta b$.
  
  \begin{enumerate}[(a)]
  \item Then $[a,b]$ is minimal in $M \setminus M \cap 2R[z]^2$ with
    respect to the term order $<_{-1}$. Moreover, if $[a',b']$ is
    minimal in $M \setminus M \cap 2R[z]^2$ then $[\mu a,\mu b] = \nu
    [\mu a', \mu b']$ for some $\nu\in K^{\times}$.

  \item If in addition $(a,b) = R[z]$ holds, then $[a,b]$ is minimal
    in $M\setminus\{0\}$ with respect to the term order $<_{-1}$, and
    if $[a',b']$ is minimal in $M\setminus M\cap 2R[z]^2$ then $[a,b]
    = \theta [a',b']$ for some $\theta\in R^{\times}$.
  \end{enumerate}
\end{theorem}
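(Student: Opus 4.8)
The plan is to derive the whole theorem from one algebraic identity together with bookkeeping for the term order $<_{-1}$. The identity is that any $[a,b],[a',b']\in M$ satisfy $z^{t+1}\mid ab'-a'b$: multiply $a(1{+}T)\equiv b$ by $a'$, multiply $a'(1{+}T)\equiv b'$ by $a$, and subtract, so that the factor $1{+}T$ cancels modulo $z^{t+1}$. I would also record two consequences of the hypotheses on $[a,b]$: first, since the leading coefficient of whichever of $a,b$ carries the $<_{-1}$-leading term is a unit, $[a,b]\notin 2R[z]^2$; second, with $\delta a\le\frac{t+1}{2}$, $\delta b\le\frac t2$, the leading term of $[a,b]$ is on the left and equals $[z^{\delta a},0]$ when $\delta a>\delta b$, and is on the right and equals $[0,z^{\delta b}]$ when $\delta a\le\delta b$.

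\emph{Part (a).} Let $[a',b']$ be a minimal element of $M\setminus M\cap 2R[z]^2$ (this set is non-empty, containing $[a,b]$, and consists of non-zero elements, so a minimal element exists). As $[a,b]$ belongs to the set, $\mathrm{lt}[a',b']\le_{-1}\mathrm{lt}[a,b]$. A short case analysis on whether the leading terms of $[a,b]$ and of $[a',b']$ sit on the left or on the right converts this into the degree bounds $\delta a'\le\delta a$, $\delta b'\le\delta a-1$ (when $\mathrm{lt}[a,b]$ is on the left) or $\delta a'\le\delta b$, $\delta b'\le\delta b$ (when it is on the right). In every case $\deg(ab'-a'b)\le t<t+1$, so the identity above forces $ab'=a'b$. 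Next I bring in $2\in(a,b)$: writing $pa+qb=2$ and using $ab'=a'b$ gives $2a'=aw$ and $2b'=bw$ with $w:=pa'+qb'$, and $w\ne 0$ since $[a',b']\notin 2R[z]^2$. Comparing degrees and leading coefficients in $2a'=aw$ (resp.\ $2b'=bw$), exploiting the unit leading coefficient of $a$ (resp.\ $b$) together with $2\cdot2=0$ in $R$, pins down $\deg w=0$, $w\in 2R$, and forces the leading term of $[a',b']$ onto the same side as that of $[a,b]$ with equal degree; hence $\mathrm{lt}[a',b']=\mathrm{lt}[a,b]$, so $[a,b]$ itself is minimal. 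Writing $w=2w_0$, from $2a'=2aw_0$ and $2b'=2bw_0$ we get $\mu a'=\mu a\,\mu w_0$, $\mu b'=\mu b\,\mu w_0$; and $[\mu a',\mu b']\ne 0$ forces $\mu w_0\in K^{\times}$, i.e.\ $[\mu a,\mu b]=\nu[\mu a',\mu b']$ with $\nu=(\mu w_0)^{-1}$.

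\emph{Part (b).} Assume in addition $(a,b)=R[z]$, say $pa+qb=1$. Let $[a',b']$ be any minimal element of $M\setminus\{0\}$; again $\mathrm{lt}[a',b']\le_{-1}\mathrm{lt}[a,b]$ and the same degree count gives $ab'=a'b$. From $pa+qb=1$ we now obtain $a'=a\theta$, $b'=b\theta$ with $\theta:=pa'+qb'$, i.e.\ $[a',b']=\theta[a,b]$ with $\theta\ne 0$. Comparing the leading terms of $[a',b']$ and $\theta[a,b]$, using the unit leading coefficient of $a$ or $b$, forces $\deg\theta=0$; hence $\mathrm{lt}[a',b']=\mathrm{lt}[a,b]$ and $[a,b]$ is minimal in $M\setminus\{0\}$. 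Finally, if $[a',b']$ is taken minimal in $M\setminus M\cap 2R[z]^2$, the same reasoning gives $[a',b']=\theta[a,b]$ with $\theta$ a nonzero constant, and $[a',b']\notin 2R[z]^2$ forces $\mu\theta\ne 0$, hence $\theta\in R^{\times}$ and $[a,b]=\theta^{-1}[a',b']$.

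The step I expect to be the main obstacle is the leading-coefficient analysis in part (a): the degree inequalities are routine but must be handled with care at the extremes (the bounds $\frac{t+1}{2}$ and $\frac t2$ are attained differently for $t$ even and $t$ odd), and, more delicately, minimality and mod-$2$ uniqueness have to be extracted from the \emph{weak} hypothesis $2\in(a,b)$ rather than from $(a,b)=R[z]$ — which is exactly where the factor $2$ in the relation $2[a',b']=w[a,b]$ and the vanishing $2\cdot2=0$ in $R$ are indispensable, in order to conclude $\deg w=0$ and $w\in 2R$.
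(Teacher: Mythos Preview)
Your argument is correct and rests on the same algebraic core as the paper's proof: the cross-relation $ab'-a'b\equiv 0\bmod z^{t+1}$, the degree analysis showing $ab'=a'b$ in $R[z]$, and the use of a relation $pa+qb=2$ (resp.\ $=1$) to deduce $2[a',b']=w[a,b]$ (resp.\ $[a',b']=\theta[a,b]$), followed by a leading-coefficient comparison exploiting the unit hypothesis on $\mathrm{lc}(a)$ or $\mathrm{lc}(b)$.

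The organization differs slightly. The paper first takes an element $[u,v]$ with \emph{strictly} smaller leading term and shows it lies in $2R[z]^2$ (this gives minimality); it then treats uniqueness separately by writing any other minimal $[a',b']$ as $\theta[a,b]+[r,s]$ with $\mathrm{lt}[r,s]<\mathrm{lt}[a,b]$ and invoking the first step on $[r,s]$. You instead compare $[a,b]$ directly with a minimal $[a',b']$ under the weak inequality $\mathrm{lt}[a',b']\le_{-1}\mathrm{lt}[a,b]$, and extract both $\mathrm{lt}[a',b']=\mathrm{lt}[a,b]$ and the modulo-$2$ relation from the single identity $2[a',b']=w[a,b]$, $w\in 2R$ constant. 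This packaging is a bit more economical---it avoids the separate remainder step---but the substantive content is the same.
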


\begin{proof}
  Let $[u,v]\in M\setminus\{0\}$ satisfy $\mathrm{lt}[u,v]
  <_{\ell}\mathrm{lt}[a,b]$ for $\ell=-1$.  We will prove $[u,v]\in
  M\cap 2R[z]^2$.  We have $ub=av \mod z^{t+1}$ and first we will
  establish equality in $R[z]$.

  Case 1: $\mathrm{lt}[a,b] = [z^{\delta a},0]$.
  
  If $\mathrm{lt}[u,v] = [z^{\delta u}, 0]$ then $\delta u < \delta a$
  and $\delta v \le \delta u + \ell$, hence $\delta v < \delta a +
  \ell$.

  If $\mathrm{lt}[u,v] = [0, z^{\delta v}]$ then $\delta u + \ell <
  \delta v$ and $\delta v \le \delta a + \ell$, hence $\delta u <
  \delta a$.

  We obtain
  \[ \delta u + \delta b < \delta a + \delta b \le t \quad\text{ and }\quad
  \delta a + \delta v \le 2\delta a + \ell \le t+1+\ell \:. \]

  Case 2: $\mathrm{lt}[a,b] = [0, z^{\delta b}]$.

  If $\mathrm{lt}[u,v] = [z^{\delta u}, 0]$ then $\delta u + \ell < \delta b$
  and $\delta v \le \delta u + \ell$, hence $\delta v < \delta b$.

  If $\mathrm{lt}[u,v] = [0, z^{\delta v}]$ then $\delta u + \ell < \delta v$
  and $\delta v < \delta b$, hence $\delta u + \ell < \delta b$.

  We obtain
  \[ \delta u + \delta b < 2\delta b - \ell \le t - \ell 
  \quad\text{ and }\quad \delta a + \delta v < \delta a + \delta b \le t \:. \]

  For $\ell=-1$ we get $\delta(ub) \le t$ and $\delta(av) \le t$ in
  all cases and therefore $ub=av$ in $R[z]$.
   
  Since $2\in (a,b)$, there exist $f,g\in R[z]$ such that $af+bg = 2$.
  Then $a(fu+gv) = 2u$ and $b(fu+gv) = 2v$.  Suppose that $fu+gv\ne
  0$.  Then, in Case~1 we have $\delta a > \delta b$, thus ${\mathrm
    lc}(a)\in R^{\times}$ by assumption, and we get $\delta a \le
  \delta(a(fu+gv)) = \delta(2u) \le \delta u$, contradicting $\delta u
  < \delta a$.  Similarly, in Case~2 we have $\delta a \le \delta b$,
  thus ${\mathrm lc}(b)\in R^{\times}$, and we get $\delta b \le
  \delta(b(fu+gv) = \delta(2v) \le \delta v$, contradicting $\delta v
  < \delta b$.  Therefore, we have $fu+gv = 0$ and hence $2u=2v=0$.
  It follows $[u,v]\in M\cap 2R[z]^2$, as desired.\medskip

  (a) The above shows that $[a,b]$ in minimal in $M\setminus M\cap
  2R[z]^2$.  Now suppose there exists $[a',b'] \in M\setminus M\cap
  2R[z]^2$ such that $\mathrm{lt}[a',b'] = \mathrm{lt}[a,b]$. 
  We note that
  \[ \mathrm{lc}[a,b] = \begin{cases}
    \mathrm{lc}(a)\in R^{\times} &\text{if }\delta a>\delta b\:,\\
    \mathrm{lc}(b)\in R^{\times} &\text{if }\delta a\le\delta b\:,
  \end{cases} \] and thus $\mathrm{lc}[a,b]$ is a unit.  Hence there
  exist $\theta\in R$ such that $[a',b'] = \theta [a,b] + [r,s]$ and
  $\mathrm{lt}[r,s] < \mathrm{lt}[a,b]$.  From the minimality of
  $[a,b]$ we deduce that $[r,s]\in M\cap 2R[z]^2$, and so $[\mu a',\mu
  b'] = \mu\theta [\mu a, \mu b]$.  Since $[\mu a',\mu b']\ne 0$ we
  have $\mu\theta\ne 0$, and hence $\nu=\mu\theta\in K^{\times}$ and
  $\theta\in R^{\times}$.\medskip

  (b) Since $(a,b) = R[z]$ there exist $f,g\in R[z]$ such that $af+bg
  = 1$.  Let $[u,v]\in M\setminus\{0\}$ satisfy $\mathrm{lt}[u,v]
  <_{-1}\mathrm{lt}[a,b]$ and consider the above proof.  We get
  $a(fu+gv) = u$ and $b(fu+gv) = v$, and as before we can prove $fu+gv
  = 0$.  Hence $u=v=0$, and hence $[a,b]$ is minimal in $M \setminus
  \{0\}$.  Now suppose there exists $[a',b'] \in M\setminus M\cap
  2R[z]^2$ such that $\mathrm{lt}[a',b'] = \mathrm{lt}[a,b]$.  As
  shown in (a) there exist $\theta\in R^{\times}$ such that $[a',b'] =
  \theta [a,b] + [r,s]$ and $\mathrm{lt}[r,s] < \mathrm{lt}[a,b]$.
  Now from the minimality of $[a,b]$ we deduce $[r,s]=0$.
\end{proof}

\begin{corollary}\label{cor:gb}
  Let $M = \{[a,b] \in R[z]^2\mid a(1\!+\!  T)\equiv b \mod z^{t+1}
  \}$, and let $[a',b']$ be the minimal regular element of a Gr\"obner
  basis of $M$.
  \begin{enumerate}[(a)]
  \item Then $[\mu\varphi, \mu\omega] = \nu [\mu a', \mu b']$
    for some $\nu\in K^{\times}$.
  \item If $e$ contains no `double-errors', then $[\varphi, \omega] =
    \theta [a', b']$ for some $\theta\in R^{\times}$.
  \end{enumerate}
\end{corollary}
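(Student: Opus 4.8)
The plan is to derive the statement from Theorem~\ref{thm:gb} applied to the pair $[a,b]=[\varphi,\omega]$, so the first task is to verify that $[\varphi,\omega]$ satisfies the four hypotheses of that theorem. Membership $[\varphi,\omega]\in M$ is exactly the key equation~(\ref{eqkey}). The degree constraints $\delta\varphi\le\tfrac{t+1}{2}$, $\delta\omega\le\tfrac{t}{2}$ are~(\ref{eqdeg}), and they hold here because $\delta\sigma=\sum_i w(e_i)=w(e)\le t$, which via~(\ref{eqphiomega}) forces the stated bounds. The relation $2\in(\varphi,\omega)$ is Corollary~\ref{cor:coprime}. Thus the only hypothesis needing real work is the leading-coefficient condition: $\mathrm{lc}(\varphi)\in R^\times$ when $\delta\varphi>\delta\omega$, and $\mathrm{lc}(\omega)\in R^\times$ when $\delta\varphi\le\delta\omega$.

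To check this I would split on the parity of $w:=\delta\sigma$, exploiting that the top coefficient $\mathrm{lc}(\sigma)=\prod_{e_i\neq 0}(-X_i)^{w(e_i)}$ is a product of units of~$R$, hence a unit. If $w$ is even, the degree-$w$ term of $\sigma$ sits in $\sigma_e$ while $z\sigma_o$ has degree at most $w$; by~(\ref{eqphiomega}) this gives $\delta\omega=w/2$, $\delta\varphi\le w/2$, and $\mathrm{lc}(\omega)=\mathrm{lc}(\sigma)\in R^\times$, so we are in the case $\delta\varphi\le\delta\omega$ and the condition is met. If $w$ is odd, then $z\sigma_o$ has degree $w+1$ and strictly dominates $\sigma_e$ (of degree at most $w-1$), so $\delta\varphi=(w+1)/2>\delta\omega$ and $\mathrm{lc}(\varphi)=\mathrm{lc}(\sigma)\in R^\times$, again giving the condition. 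In both cases $\mathrm{lc}(\sigma)\in R^\times$ also shows $[\varphi,\omega]\notin 2R[z]^2$, and Theorem~\ref{thm:gb}(a) then tells us that $[\varphi,\omega]$ is $<_{-1}$-minimal in $M\setminus M\cap 2R[z]^2$.

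It remains to know that the minimal regular element $[a',b']$ of a Gr\"obner basis of $M$ is likewise $<_{-1}$-minimal in $M\setminus M\cap 2R[z]^2$; this is a structural property of Gr\"obner bases of solution modules of this kind, which I would import from \cite{BF02,F95} (the worked example above exhibits the typical picture, with two regular basis elements of leading terms $[z,0]$ and $[0,z]$ and the remaining two being their doubles). Granting this, part~(a) follows from the second assertion of Theorem~\ref{thm:gb}(a) applied with $[a,b]=[\varphi,\omega]$: both $[\varphi,\omega]$ and $[a',b']$ are $<_{-1}$-minimal in $M\setminus M\cap 2R[z]^2$, whence $[\mu\varphi,\mu\omega]=\nu[\mu a',\mu b']$ for some $\nu\in K^\times$. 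For part~(b), the absence of double-errors gives $(\varphi,\omega)=R[z]$ by Remark~\ref{rem:coprime}, so Theorem~\ref{thm:gb}(b) is in force and yields $[\varphi,\omega]=\theta[a',b']$ for some $\theta\in R^\times$.

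The step I expect to be the main obstacle is the leading-coefficient verification in the second paragraph: one must control the potential cancellation between $\sigma_e$ and $z\sigma_o$ in the definition of $\varphi$ in~(\ref{eqphiomega}), and keep track of which alternative of Theorem~\ref{thm:gb} is active according to the parity of $\delta\sigma$. The degree bounds are immediate, and the identification of the minimal regular element of the Gr\"obner basis, although it leans on the detailed analysis of these solution modules in the cited references, is standard.
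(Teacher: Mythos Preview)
Your proposal is correct and follows essentially the same route as the paper: verify the hypotheses of Theorem~\ref{thm:gb} for $[\varphi,\omega]$ by splitting on the parity of $w=\delta\sigma$, invoke Corollary~\ref{cor:coprime} for $2\in(\varphi,\omega)$, and use Remark~\ref{rem:coprime} for part~(b). Your parity analysis and the observation that $\mathrm{lc}(\sigma)$ is a unit are exactly what the paper uses (though the paper states the degree and leading-coefficient claims without spelling out the computation), and your explicit flagging of the fact that the minimal regular Gr\"obner basis element is $<_{-1}$-minimal in $M\setminus M\cap 2R[z]^2$ is a point the paper leaves implicit.
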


\begin{proof}
  Let $w:=w(e)=\delta\sigma\le t$ be the number of errors occurred.
  If $w$ is odd, then $\delta\varphi = \frac{w+1}{2}$ and
  $\delta\omega \le \frac{w-1}{2}$, hence $\delta\varphi >
  \delta\omega$; and ${\mathrm lc}(\varphi)\in R^{\times}$.  If $w$ is
  even, then $\delta\omega = \frac{w}{2}$ and $\delta\varphi \le
  \frac{w}{2}$, hence $\delta\varphi \le \delta\omega$; and ${\mathrm
    lc}(\omega)\in R^{\times}$.  By Corollary~\ref{cor:coprime}, we
  have $2\in (\varphi,\omega)$.  So we can apply Theorem~\ref{thm:gb}
  with Remark~\ref{rem:coprime}. 
\end{proof}

We note that since $\omega(0) = \varphi(0) = 1$ we may choose
$[a',b']$ such that $a'(0) = b'(0) = 1$, and then we have
$[\mu\varphi, \mu\omega] = [\mu a', \mu b']$ and $[\varphi, \omega] =
[ a', b' ]$, respectively.

\section{Decoding $\Z_4$-linear Negacyclic Codes}

Let the $\Z_4$-linear negacyclic code $C$ be given as in the previous
sections, and let $v, c, e \in \Z_4[z]$, $\sigma, \sigma_o, \sigma_e,
\varphi, \omega \in R[z]$ and $T \in R(z)$ be given as before.  In
particular, $v=c+e$ with $c\in C$ and the error vector $e$ is of Lee
weight at most $t$.  Let $M = \{[a,b] \in R[z]\mid a\,(1+T) \equiv b
\mod z^{t+1}\}$ be the module of solutions to the key equation
(\ref{eqkey}).  We first compute a Gr\"obner basis of $M$ relative to
the term order $<_{-1}$, which contains an element $[a,b]$ such that
$\mu a = \mu \varphi$ and $\mu b = \mu \omega$.  Then $\mu \varphi,
\mu \omega$ can be used to determine $\mu \sigma = \prod_{i=0}^{n-1}
(1-\mu X_iz)^{w(e_i)} \in K[z]$ via the equations
\[ \mu \sigma = \mu \sigma_e + \mu \sigma_o\:,\ \mu \sigma_e(z) =
\mu\omega(z^2) \:, \text{ and }\ \mu\varphi(z^2) = \mu\sigma_e(z) + z
\mu\sigma_o(z) \:. \] 

Knowledge of $\mu\sigma$ is not sufficient to recover the error
pattern $e$, as errors of the form $e_j=\pm 1$ cannot be
distinguished.  However, by examining the roots of $\mu\sigma$ we find
all error positions, and by examining the double roots we get all
locations $j$ where $e_j=2$ (i.e., the `double-errors').

Let $e^2\in\Z_4^n$ be defined by $e_j^2=2$ if $e_j=2$ and $e_j^2=0$
otherwise.  Note that $e^2$ is completely determined by the roots of
$\mu\sigma$.  Now consider the word $v' := v - e^2 = c + e'$ with $e'
:= e - e^2$.  Then $e'$ does not contain double-errors and has Lee
weight at most~$t$.  Then, using Corollary~\ref{cor:gb}, the error
pattern $e'$ can be found by computing the minimal regular element of
a Gr\"obner basis.

We outline the steps of the algorithm below.

\begin{algorithm}[Algebraic Decoding of $\Z_4$ Negacyclic Codes]
  Let $C$ be a negacyclic code over $\Z_4$ of length $n$,
  whose generator polynomial has roots $\alpha$, $\alpha^3$, \dots,
  $\alpha^{2t-1}$ for a primitive $2n$th root of unity $\alpha \in
  R$ such that $\alpha^n = -1$.\medskip

  Input: $v \in \Z_4[z]$ such that $d(v,C)\le t$

  Output: $c\in C$ such that $w(v-c)\le t$
  
  \begin{enumerate}
  \item Compute the syndromes $s_k := v(\alpha^k)$ for $k =
    1,3,\dots,2t\!-\!1$.
  \item Compute the coefficients $u_k$ using equation (\ref{eqsu}) for
    $k = 1,3,\dots,2t\!-\!1$.  Let $u := \sum_{k=1}^{2t-1} u_k z^k$.
  \item Compute $T(z) \mod z^{t+1}$ from $u$ using equation
    (\ref{equT}).
  \item Obtain a solution $[g,h] \in R[z]^2$ of the key equation
    $a(1+T) = b \mod z^{t+1}$ satisfying the hypothesis of
    Theorem~\ref{thm:gb}.  One way to do this is to identify the
    minimal regular element of a Gr\"obner basis of the solution
    module $M$, relative to the term order $<_{-1}$.
  \item Compute $\mu \sigma(z) = \mu \sigma_e(z) + \mu\sigma_o(z) :=
    \mu h(z^2) + z^{-1}(\mu g(z^2)-\mu h(z^2))$.
  \item Evaluate $\mu\sigma(\mu\alpha^{-j})$ for $j=0,\dots,n\!-\!1$.
    \begin{itemize}
    \item If $\mu\alpha^{-j}$ is a double root of $\mu\sigma$ then
      $e_j=2$.
    \item If $\mu\alpha^{-j}$ is a single root of $\mu\sigma$ then $e_j
      \in \{ \pm 1 \}$.
    \end{itemize}
  \item Let $e^2 := \sum_{j, e_j=2} 2z^j$, and let $v' := v - e^2$.
  \item Repeat Steps 1.--4. with $v'$ in place of $v$, and compute
    $\sigma'(z) = \sigma'_e(z) + \sigma'_o(z) := h(z^2) +
    z^{-1}(g(z^2)-h(z^2))$.
  \item Compute $e'$ by evaluating $\sigma(\alpha^{-j})$ 
    and $\sigma(\alpha^{-j+n})$ for $j=0,\dots,n\!-\!1$.
    \begin{itemize}
    \item If $\alpha^{-j}$ is a root of $\sigma$ then $e'_j=1$.
    \item If $\alpha^{-j+n}$ is a root of $\sigma$ then $e'_j=3$.
    \end{itemize}
  \item Output $c := v' - e'$.
  \end{enumerate}
\end{algorithm}

We will conclude our work by a concrete example.
 
\begin{example}
  Let $R = {\rm GR}(4,4) = \Z_4[x] / \langle x^4 \!+\!  2x^2
  \!+\!  3x \!+\!  1 \rangle$ and let $\alpha = [x]\in R$.  We use a
  code of length $n=15$ with $t=2$.  Let the received word be $v = 2 +
  z + 3z^2 + 2z^4 + z^5 + 2z^6 + 3z^7 + z^8 + 3z^9 + z^{10} +
  3z^{13}$.

  \begin{enumerate}
  \item The list of syndromes is $[ 3\alpha^3 + \alpha^2 + 3\alpha +
    2, \,2\alpha^3 + \alpha^2 + 2\alpha + 1 ]$.
    \addtocounter{enumi}{1}
  \item The $T$ polynomial of the key equation is $T(z) = (2\alpha^3 +
    \alpha^2 + \alpha)z^2 + (3\alpha^3 + \alpha^2 + 3\alpha + 2)z +
    1$.
  \item The solution of the key equation is $[(\alpha^3 + 2\alpha^2 +
    3\alpha + 3)z + 3\alpha^3 + 3\alpha^2 + 2\alpha + 3,\, z +
    3\alpha^3 + 3\alpha^2 + 2\alpha + 3)] \in R[z]^2$.
    \addtocounter{enumi}{2}
  \item We find $e^2=0$.  \addtocounter{enumi}{1}
  \item Repeating the process with $v'=v$ we find 
    $e'=z^4-z^{13}$ and $c=v'-e'$.
  \end{enumerate}
\end{example}

\end{document}